\definecolor{azure(colorwheel)}{rgb}{0.0, 0.5, 1.0}
\definecolor{Pink}{RGB}{255, 105, 180}
\newtheorem*{thm*}{Theorem}
\newtheorem{thm}{Theorem}
\newtheorem{lem}[thm]{Lemma}
\newtheorem{pro}[thm]{Proposition}
\newtheorem{obs}[thm]{Observation}
\newtheorem{cor}[thm]{Corollary}
\newtheorem{conj}[thm]{Conjecture}
\newtheorem{ques}[thm]{Question}
\crefname{thm}{Theorem}{Theorems}
\crefname{lem}{Lemma}{Lemmas}
\newcommand{\N}{\mathbb{N}}
\newcommand{\E}{\mathbb{E}}
\begin{document}

\title{Enumerative Chromatic Choosability}

\author{Sarah Allred \thanks{Department of Mathematics and Statistics, University of South Alabama, Mobile, AL, USA (sarahallred@southalabama.edu)}
\and 
Jeffrey A. Mudrock \thanks{Department of Mathematics and Statistics, University of South Alabama, Mobile, AL, USA (mudrock@southalabama.edu)}}

\maketitle

\begin{abstract}
Chromatic-choosablility is a notion of fundamental importance in list coloring.  A graph is \emph{chromatic-choosable} when its chromatic number is equal to its list chromatic number.  In 1990, Kostochka and Sidorenko introduced the list color function of a graph $G$, denoted $P_{\ell}(G,m)$, which is the list analogue of the chromatic polynomial of $G$, $P(G,m)$.  It is known that for any graph $G$ there is a positive integer $k$ such that $P_{\ell}(G,m) = P(G,m)$ whenever $m \geq k$. In this paper, we study enumerative chromatic-choosability.  A graph $G$ is \emph{enumeratively chromatic-choosable} when $P_{\ell}(G,m) = P(G,m)$ whenever $m \in \N$.  We completely determine the graphs of chromatic number two that are enumeratively chromatic-choosable.  We construct examples of graphs that are chromatic-choosable but fail to be enumeratively-chromatic choosable, and finally, we explore a conjecture as to  whether for every graph $G$, there is a $p \in \N$ such that the join of $G$ and $K_p$ is enumeratively chromatic-choosable.  The techniques we use to prove results are diverse and include probabilistic ideas and ideas from DP (or correspondence)-coloring.
 
\medskip

\noindent {\bf Keywords.} list coloring, chromatic polynomial, list color function, list color function threshold, enumerative chromatic-choosability

\noindent \textbf{Mathematics Subject Classification.} 05C15, 05C30

\end{abstract}

\section{Introduction}\label{intro}
In this paper, all graphs are nonempty, finite, simple graphs.  Generally speaking, we follow West~\cite{W01} for terminology and notation.  The set of natural numbers is $\N = \{1,2,3, \ldots \}$.  For $m \in \N$, we write $[m]$ for the set $\{1, \ldots, m \}$.  The AM-GM inequality refers to the Arithmetic Mean-Geometric Mean inequality.  We write $K_{l,n}$ for complete bipartite graphs with partite sets of size $l$ and $n$.  For a path $P$, we write $P=v_1,~v_2,\dots ,~ v_n$ to mean $P$ is the path with vertex set $\{v_1,\dots,v_n\}$ and edge set $\{v_iv_{i+1}~:~1\le i<n\}$.  If $G$ and $H$ are vertex disjoint graphs, we write $G \vee H$ for the join of $G$ and $H$.

\subsection{List Coloring and Chromatic-Choosability} \label{basic}

For classical vertex coloring of graphs, we wish to color the vertices of a graph $G$ with up to $m$ colors from $[m]$ so that adjacent vertices receive different colors, a so-called \emph{proper $m$-coloring}.  The \emph{chromatic number} of a graph is a very well studied graph invariant. Denoted $\chi(G)$, the chromatic number of graph $G$ is the smallest $m$ such that $G$ has a proper $m$-coloring.  List coloring is a well-known variation on classical vertex coloring which was introduced independently by Vizing~\cite{V76} and Erd\H{o}s, Rubin, and Taylor~\cite{ET79} in the 1970s.  For list coloring, we associate a \emph{list assignment} $L$ with a graph $G$ such that each vertex $v \in V(G)$ is assigned a list of available colors $L(v)$ (we say $L$ is a list assignment for $G$).  We say $G$ is \emph{$L$-colorable} if there is a proper coloring $f$ of $G$ such that $f(v) \in L(v)$ for each $v \in V(G)$ (we refer to $f$ as a \emph{proper $L$-coloring} of $G$).  A list assignment $L$ for $G$ is called a \emph{$m$-assignment} if $|L(v)|=m$ for each $v \in V(G)$.  Graph $G$ is said to be $m$-choosable if it is $L$-colorable whenever $L$ is a $m$-assignment for $G$.  The \emph{list chromatic number} of a graph $G$, denoted $\chi_\ell(G)$, is the smallest $k$ for which $G$ is $m$-choosable.  It is easy to show that for any graph $G$, $\chi(G) \leq \chi_\ell(G)$.  Moreover, the gap between the chromatic number and list chromatic number of a graph can be arbitrarily large since $\chi_{\ell}(K_{n,t}) = n+1$ whenever $t \geq n^n$ (see e.g.,~\cite{M18} for further details).  

A graph $G$ is called \emph{chromatic-choosable} if $\chi_{\ell}(G) = \chi(G)$~\cite{O02}.   Determining whether a graph is chromatic-choosable is, in general, a challenging problem.  Perhaps the most well known conjecture involving list coloring is about chromatic-choosability.  Indeed, the famous Edge List Coloring Conjecture states that every line graph of a loopless multigraph is chromatic-choosable (see~\cite{HC92}).

It is shown in~\cite{O02} that for any graph $G$, there exists an $N \in \N$ such that $K_p \vee G$ is chromatic-choosable whenever $p \geq N$.  In 2015, Noel, Reed, and Wu~\cite{NR15} generalized this by famously proving the following.

\begin{thm} [\cite{NR15}] \label{thm: Ohba}
If $G$ is a graph satisfying $|V(G)| \leq 2 \chi(G) + 1$, then $G$ is chromatic-choosable.
\end{thm}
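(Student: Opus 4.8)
The plan is to prove the equivalent assertion that every graph $G$ with $|V(G)| \le 2\chi(G)+1$ is $\chi(G)$-choosable, via a minimal counterexample, following Noel, Reed, and Wu. Suppose the assertion fails, and among all counterexamples pick one, $G$, with $|V(G)|$ as small as possible; set $k = \chi(G)$ and $n = |V(G)| \le 2k+1$. Because $\chi(G) \le \chi_\ell(G)$ always holds, the failure provides a $k$-assignment $L$ with no proper $L$-coloring; fix such an $L$ for which the color pot $\abs{\bigcup_{v \in V(G)} L(v)}$ is as small as possible. The goal is to derive a contradiction by exhibiting a proper $L$-coloring of $G$.

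First I would run the standard normalizations. By the \emph{small-pot lemma} --- if the pot were large, two colors each occurring in very few lists could be identified without destroying the obstruction --- we may assume at most $|V(G)| \le 2k+1$ colors appear in $L$ altogether. By \emph{criticality}, for every $v \in V(G)$ the graph $G - v$ is $L$-colorable: otherwise, when $\chi(G - v) = k$, the graph $G - v$ would itself be a smaller counterexample, and the borderline sub-case $\chi(G - v) = k-1$ (where $n-1$ may exceed $2\chi(G-v)+1$) is disposed of by a short separate argument. In particular $G$ is connected and has no vertex adjacent to all others, since such a vertex could be colored last.

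The core of the proof is an independent-set extraction. Observe that a proper $L$-coloring of $G$ is the same data as a family of pairwise-disjoint independent sets $(I_c)$, indexed by the colors, with $I_c \subseteq \{v : c \in L(v)\}$ for each $c$ and $\bigcup_c I_c = V(G)$; so non-$L$-colorability means $V(G)$ cannot be covered by ``legal'' independent sets. Starting from an optimal proper $k$-coloring $V_1, \dots, V_k$ of $G$ --- of which at most $n-k \le k+1$ classes have size $\ge 2$ --- I would, using criticality and the small-pot bound, produce an independent set $I$ of $G$ such that (i) $c \in L(v)$ for all $v \in I$ and some fixed color $c$; (ii) $\chi(G-I) = k-1$; and (iii) $|V(G-I)| \le 2k-1$. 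Then $G-I$ is a smaller graph lying within the scope of the theorem, hence not a counterexample, hence $(k-1)$-choosable; it therefore admits a proper coloring from the lists $w \mapsto L(w) \setminus \{c\}$, and assigning $c$ to every vertex of $I$ extends this to a proper $L$-coloring of $G$, the desired contradiction.

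The genuine difficulty lies entirely in producing the set $I$. Conditions (ii) and (iii) pull against each other: (iii) forces $I$ to contain at least $n-2k+1$ vertices --- at least two when $n = 2k+1$ --- while (ii) forbids $I$ from absorbing so much of the coloring $V_1, \dots, V_k$ that the chromatic number falls by more than one; and on top of this $I$ must carry a single color legal for all of its members. Reconciling these demands calls for an intricate case analysis, organized by how many of the classes $V_i$ are singletons and by which colors are ``popular'' (lying in many lists) versus ``rare'', and it is exactly this extraction lemma --- not the soft reductions above --- that constitutes the real content of the theorem, and explains why Ohba's conjecture withstood proof long after its complete-multipartite and asymptotic special cases were settled.
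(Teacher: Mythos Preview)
The paper does not prove this theorem; it is quoted from~\cite{NR15} as background, with no argument given. So there is no ``paper's own proof'' to compare against, and your proposal must be assessed on its own terms.

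As a proof, the proposal is incomplete by your own admission: you reduce everything to the existence of an independent set $I$ satisfying (i)--(iii) and then write that producing $I$ ``calls for an intricate case analysis'' which you do not supply. That is the entire content of the theorem, so what you have written is an outline, not a proof.

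The outline also diverges from the actual Noel--Reed--Wu argument in ways that matter. First, you omit the standard reduction that a minimal counterexample may be taken to be a \emph{complete $k$-partite} graph (adding edges inside colour classes preserves $\chi$ and can only obstruct list-colouring further); without this, your conditions (ii) and (iii) are much harder to analyse, since ``independent set whose removal drops $\chi$'' has no clean description. Second, even with that reduction, the scheme ``find a single legal colour class $I$, colour it, and recurse on $G-I$ with lists of size $k-1$'' is not how the proof goes: there are list assignments where no single monochromatic independent set large enough to enforce (iii) exists, and the real argument instead builds the entire colouring at once via a more delicate greedy/matching procedure over the parts and the colours, driven by the small-pot bound and a classification of colours by frequency. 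Your sketch captures the soft preliminaries (minimal counterexample, small pot, criticality) accurately, but the inductive mechanism you propose is not one that can be completed as stated.
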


\subsection{Counting Proper Colorings and List Colorings}

In 1912, Birkhoff~\cite{B12} introduced the notion of the chromatic polynomial in hopes of using it to make progress on the four color problem.  For $m \in \N$, the \emph{chromatic polynomial} of a graph $G$, $P(G,m)$, is the number of proper $m$-colorings of $G$. It is easy to show that $P(G,m)$ is a polynomial in $m$ of degree $|V(G)|$ (e.g., see~\cite{DKT05}). For example, whenever $n \in \N$ it is well known that $P(K_n,m) = \prod_{i=0}^{n-1} (m-i)$, $P(C_n,m) = (m-1)^n + (-1)^n (m-1)$, $P(T,m) = m(m-1)^{n-1}$ whenever $T$ is a tree on $n$ vertices, $P(K_{2,n},m) = m(m-1)^n + m(m-1)(m-2)^n$, and $P(K_1 \vee G, m) = m P(G,m-1)$ for each $m \geq 2$ (see~\cite{B94} and~\cite{W01}).  Note that the formula for the chromatic polynomial of a cycle on $n$ vertices still holds when it is not simple (i.e., when $n=1,2$). 

The notion of chromatic polynomial was extended to list coloring in the early 1990s by Kostochka and Sidorenko~\cite{AS90}.  If $L$ is a list assignment for $G$, let $P(G,L)$ denote the number of proper $L$-colorings of $G$. The \emph{list color function} $P_\ell(G,m)$ is the minimum value of $P(G,L)$ where the minimum is taken over all possible $m$-assignments $L$ for $G$.  Since an $m$-assignment could assign the same $m$ colors to every vertex in a graph, it is clear that $P_\ell(G,m) \leq P(G,m)$ for each $m \in \N$.  In general, the list color function can differ significantly from the chromatic polynomial for small values of $m$.  For example, for any $n \geq 2$, $P_{\ell}(K_{n,n^n},m) = 0$ and $P(K_{n,n^n},m) > 1$ whenever $m \in \{2, \ldots, n\}$.  On the other hand,  in 1992, Donner~\cite{D92} showed that for any graph $G$ there is a $k \in \N$ such that $P_\ell(G,m) = P(G,m)$ whenever $m \geq k$. 

\subsection{Enumeratively Chromatic-Choosable Graphs} 

With Donner's 1992 result in mind, it is natural to study the point at which the list color function of a graph becomes identical to its chromatic polynomial.  Given any graph $G$, the \emph{list color function number of $G$}, denoted $\nu(G)$, is the smallest $t \geq \chi(G)$ such that $P_{\ell}(G,t) = P(G,t)$.  The \emph{list color function threshold of $G$}, denoted $\tau(G)$, is the smallest $k \geq \chi(G)$ such that $P_{\ell}(G,m) = P(G,m)$ whenever $m \geq k$.  Clearly, $\chi(G) \leq \chi_\ell(G) \leq \nu(G) \leq \tau(G)$.

One of the most important open questions on the list color function asks whether the list color function number of a graph can differ from its list color function threshold. 

\begin{ques} [\cite{KN16}] \label{ques: threshold}
	For every graph $G$, is it the case that $\nu(G) = \tau(G)$?
\end{ques} 

Much of the research on the list color function has been focused on studying the list color function threshold for a general graph $G$.  Currently, the best known bound is as follows.

\begin{thm} [\cite{DZ22}] \label{thm: thresh}
For every graph $G$ with $|E(G)| \geq 4$, $\tau(G) \leq |E(G)|-1$.
\end{thm}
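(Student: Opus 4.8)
The plan is to prove the equivalent statement that $P(G,L) \geq P(G,m)$ for every $m$-assignment $L$ of $G$ whenever $m \geq |E(G)| - 1$: since the constant $m$-assignment $v \mapsto [m]$ has exactly $P(G,m)$ proper colorings, we always have $P_{\ell}(G,m) \leq P(G,m)$, so this lower bound yields $P_{\ell}(G,m) = P(G,m)$ for all $m \geq |E(G)|-1$, i.e. $\tau(G) \leq |E(G)|-1$. Write $V = V(G)$ and $E = E(G)$. The starting point is inclusion--exclusion applied to the events that individual edges are monochromatic: for any $m$-assignment $L$,
\[
P(G,L) = \sum_{F \subseteq E} (-1)^{|F|} \prod_{C} \Bigl|\bigcap_{v \in C} L(v)\Bigr|,
\]
the inner product running over the connected components $C$ of the spanning subgraph $(V,F)$; taking $L(v) = [m]$ for all $v$ recovers $P(G,m) = \sum_{F \subseteq E} (-1)^{|F|} m^{c(V,F)}$, where $c(V,F)$ is the number of components of $(V,F)$.

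Next I would subtract these identities and reorganize. Writing $\bigl|\bigcap_{v \in C}L(v)\bigr| = m - d_C$ with $d_C \geq 0$ (so $d_C = 0$ for singleton $C$), expanding every product, and collecting the resulting monomials according to the vertex set $U$ spanned by the components at which the factor $-d_C$ was selected, one is led to a sum indexed by pairwise disjoint connected subsets of $V$. The one ingredient here beyond bookkeeping is the identity that, for a connected (multi)graph $H$, the sign-weighted count $\sum_F(-1)^{|F|}$ over connected spanning subgraphs $F$ of $H$ equals $(-1)^{|V(H)|-1}T_H(1,0)$, where $T_H$ is the Tutte polynomial and $T_H(1,0)\geq 0$; this is immediate from the standard subgraph expansion of $T_H$. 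Carrying out the bookkeeping yields
\[
P(G,L) - P(G,m) = \sum_{\emptyset \ne U \subseteq V} (-1)^{|U|}\, \Phi(U)\, P\bigl(G[V\setminus U],\, m\bigr), \qquad \Phi(U) := \sum_{U = C_1 \sqcup \cdots \sqcup C_k}\ \prod_{i=1}^{k} d_{C_i}\, T_{G[C_i]}(1,0),
\]
where the sum defining $\Phi(U)$ is over partitions of $U$ into connected subsets each of size at least $2$ (so $\Phi(U) = 0$ unless such a partition exists, in particular when $|U|=1$). Since $d_C \geq 0$, $T_{G[C]}(1,0)\geq 0$ and $P(G[V\setminus U],m)\geq 0$, each term above is nonnegative apart from the global sign $(-1)^{|U|}$.

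It remains to show that for $m \geq |E|-1$ this alternating sum is $\geq 0$, i.e. that the odd-$|U|$ terms are dominated by the even ones. Here I would use two estimates: $d_C \leq \sum_{e \in T} a_e$ for any spanning tree $T$ of $G[C]$, where for $e = uv$ we put $a_e := m - |L(u)\cap L(v)| = |L(u)\setminus L(v)| \leq m$ (proved by intersecting lists along $T$); and $T_{G[C]}(1,0) \leq \kappa(G[C])$, the number of spanning trees of $G[C]$. Together these bound $\Phi(U)$ in terms of the total overlap $\sum_{e\in E} a_e$ and spanning-tree counts of subgraphs of $G$, while $P(G[V\setminus U],m)$ can be squeezed between powers of $m$; feeding $m\geq |E|-1$ into the resulting inequalities, one charges each negative term onto nonnegative ones --- for instance onto the $U=\emptyset$ term $P(G,m)$, or onto an even-sized set obtained by adjoining or deleting one carefully chosen vertex of $U$ --- and verifies the bound. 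The hypothesis $|E(G)|\geq 4$ is exactly what is needed for these estimates to close, and the finitely many graphs with at most three edges are handled by hand.

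The step I expect to be the main obstacle is precisely this last one. As $U$ grows, the weight $\Phi(U)$ tends to increase while $P(G[V\setminus U],m)$ tends to decrease, so there is no cheap monotonicity or bijective pairing available; one must control quite precisely how the deficiencies $d_C$ and the spanning-tree (Tutte) counts accumulate across a partition in order to see that $m = |E(G)|-1$ already suffices, and to locate the small-graph exceptions. Everything preceding it --- the inclusion--exclusion expansion, the Tutte-polynomial identity, and the derivation of the sign-alternating formula --- is routine.
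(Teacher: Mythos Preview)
The paper does not prove this theorem at all: it is quoted from~\cite{DZ22} as background, with no argument given. So there is no ``paper's own proof'' to compare against, and your proposal must stand on its own.

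Your setup is sound. The inclusion--exclusion expansion, the identity $\sum_{F\text{ conn.\ spanning}}(-1)^{|F|}=(-1)^{|V(H)|-1}T_H(1,0)$, and the resulting formula
\[
P(G,L)-P(G,m)=\sum_{\emptyset\ne U\subseteq V}(-1)^{|U|}\,\Phi(U)\,P\bigl(G[V\setminus U],m\bigr)
\]
are all correct (the sign bookkeeping checks: the $(-1)^k$ from choosing $k$ factors of $-d_{C_i}$ combines with $\prod_i(-1)^{|C_i|-1}$ to give $(-1)^{|U|}$). This expansion, or a close variant of it, is indeed the starting point of the published proofs of bounds on $\tau(G)$.

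The genuine gap is exactly where you locate it. You have not actually carried out the domination of odd-$|U|$ terms by even-$|U|$ terms, and the tools you list --- the spanning-tree bound $d_C\le\sum_{e\in T}a_e$, the crude estimate $T_{G[C]}(1,0)\le\kappa(G[C])$, and sandwiching $P(G[V\setminus U],m)$ between powers of $m$ --- are too lossy to reach the sharp threshold $|E(G)|-1$. Earlier work (Wang--Qian--Yan) using essentially this expansion together with bounds of comparable strength obtained only $\tau(G)\lesssim 1.135\,|E(G)|$; pushing down to $|E(G)|-1$ in~\cite{DZ22} required a considerably more delicate pairing and estimation of the terms than the ``charge each negative term onto a nearby even one'' heuristic you sketch. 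As written, your proposal is an accurate outline of where the difficulty lies, but it does not contain the idea that closes it.
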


   As far as we are aware the notion of enumerative chromatic choosability was fist formally defined in~\cite{KK23} even though it has been pursued since the 1990s.  A graph $G$ is said to be \emph{enumeratively chromatic-choosable} if $\tau(G) = \chi(G)$.
    This means that a graph is \emph{enumeratively chromatic-choosable} if $P_{\ell}(G,m)=P(G,m)$ for $m\ge\chi(G)$.
  Additionally, we call a graph \emph{weakly enumeratively chromatic-choosable} if $\nu(G) = \chi(G)$; that is, when $P_{\ell}(G,\chi(G))=P(G,\chi(G))$.
  We now present some results on these notions.  Note that  \emph{generalized theta graphs} $\Theta(l_1, \ldots, l_n)$ consist of a pair of end vertices joined by $n$ internally disjoint paths of lengths $l_1, \ldots, l_n \in \N$. When there are three such paths we have a \emph{theta graph}. 

\begin{thm} \label{thm: examples} [\cite{BK23, HK23, KK23, KM18, KN16, AS90}]
The following statements hold.

\begin{enumerate}[label=(\roman*)]
    \item Chordal graphs are enumeratively chromatic-choosable.
    \item Cycles are enumeratively chromatic-choosable
    \item $K_{2,3}$ is enumeratively chromatic-choosable.
    \item If $1 \leq l_1 \leq l_2$, $2 \leq l_2 \leq l_3$, and the parity of $l_1$ is different from both $l_2$ and $l_3$, then $\Theta(l_1, l_2, l_3)$ is enumeratively chromatic-choosable.
    \item If $G$ is enumeratively chromatic-choosable (resp. weakly enumeratively \\ chromatic-choosable), then $G \vee K_n$ is enumeratively chromatic-choosable (resp. weakly enumeratively chromatic-choosable) for any $n \in \N$.  
\end{enumerate}
\end{thm}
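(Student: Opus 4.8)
These five parts concern rather different graph families and call for different methods, though parts (iii) and (iv) build on the ideas used for (i) and (ii), and (v) is independent and essentially algebraic. For (i), fix a perfect elimination ordering $v_1, \ldots, v_n$ of the chordal graph $G$, so that for each $i$ the set $N(v_i) \cap \{v_{i+1}, \ldots, v_n\}$ is a clique; write $d_i$ for its size. Given any $m$-assignment $L$, color the vertices in the order $v_n, v_{n-1}, \ldots, v_1$. When $v_i$ is reached its already-colored neighbors form a clique on $d_i$ vertices and hence have used $d_i$ distinct colors, so $v_i$ has at least $m - d_i$ admissible colors in $L(v_i)$; therefore $P(G,L) \ge \prod_{i=1}^{n}(m - d_i)$. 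Since $\prod_{i=1}^{n}(m-d_i) = P(G,m)$ for chordal $G$ — this is exactly how one computes the chromatic polynomial of a chordal graph — and $P_\ell(G,m) \le P(G,m)$ always, we conclude $P_\ell(G,m) = P(G,m)$ for every $m$, in particular for all $m \ge \chi(G)$.

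For (ii), the greedy bound above is too weak (for $C_4$ with $m=2$ it returns $0$ while $P(C_4,2) = 2$), so a tailored argument is required. I would induct on $n$, with $C_3 = K_3$ handled by (i) as the base case. Given an $m$-assignment $L$ for $C_n = v_1 v_2 \cdots v_n v_1$ with $m \ge 2$: if every pair of consecutive vertices is assigned the same list then all lists coincide and $P(C_n,L) = P(C_n,m)$ directly; otherwise choose a color $c$ lying in exactly one list of some adjacent pair and split the count of proper $L$-colorings according to whether the relevant vertex receives $c$, which peels off a path and reduces to a shorter cycle — mirroring the recursion $P(C_n,m) = (m-1)P(C_{n-2},m) + (m-2)P(C_{n-1},m)$ satisfied by the closed form $(m-1)^n + (-1)^n(m-1)$ — after which the inductive hypothesis and the exact formula close the estimate. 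The bookkeeping showing the uniform assignment is the worst case is where the work lies.

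For (iii) and (iv), note $K_{2,3} = \Theta(2,2,2)$ and that a theta graph is two cycles glued along a common path, the three paths meeting at two ``hub'' vertices $x$ and $y$; under the parity hypothesis in (iv) one first checks $\chi(\Theta(l_1,l_2,l_3)) = 3$. Given an $m$-assignment $L$, I would condition on the ordered pair of colors assigned to $(x,y)$: once these are fixed the three internal paths are colored independently, each contributing a path-count controlled by the path and cycle estimates from (i) and (ii). One then sums over admissible color pairs for $(x,y)$ and must verify that no non-uniform $L$ beats the uniform one; this is where genuine casework on the relative structure of $L(x)$ and $L(y)$ enters, and where the parities $l_i+l_j$ of the constituent cycles (hence the parity hypothesis) are used. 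I expect this casework to be the main obstacle; organizing it in the language of DP-coloring — tracking, for each interior vertex, which color pairs on its two neighbors are forbidden — is the natural way to isolate exactly when a non-uniform assignment could be dangerous, and the $K_{2,3}$ case is separated out precisely because its all-even parities fall outside the clean regime of (iv).

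For (v), let $u$ be the vertex of the $K_1$ factor in $K_1 \vee G$ and let $L$ be an arbitrary $m$-assignment with $m \ge \chi(G) + 1 = \chi(K_1 \vee G)$. For each color $c \in L(u)$, coloring $u$ with $c$ and deleting $c$ from every other list yields a list assignment for $G$ in which every vertex still has at least $m-1$ colors; restricting it to an $(m-1)$-assignment and using that proper-coloring counts are monotone in the lists, the number of completions is at least $P_\ell(G,m-1)$. Summing over the $m$ choices of $c$ gives $P(K_1 \vee G, L) \ge m\, P_\ell(G,m-1)$, hence $P_\ell(K_1 \vee G, m) \ge m\, P_\ell(G,m-1)$. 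If $G$ is enumeratively chromatic-choosable then $P_\ell(G,m-1) = P(G,m-1)$ because $m-1 \ge \chi(G)$, and combining with $P(K_1 \vee G,m) = m\, P(G,m-1)$ and $P_\ell \le P$ forces $P_\ell(K_1 \vee G,m) = P(K_1 \vee G,m)$; writing $K_n \vee G = K_1 \vee (K_{n-1} \vee G)$ and iterating gives the statement for all $n$. The weakly enumeratively chromatic-choosable version is the same computation read off only at $m = \chi(K_1 \vee G)$, i.e.\ $m-1 = \chi(G)$, using $P_\ell(G,\chi(G)) = P(G,\chi(G))$, and then iterated.
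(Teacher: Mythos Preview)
The paper does not prove this theorem; it is presented as a compilation of results from the cited references, with the single remark that (v) is a consequence of the cited \cref{thm: joinK1}. So there is no ``paper's own proof'' to compare against for (i)--(iv), and for (v) the paper simply invokes a black box.

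Your argument for (i) is correct and standard, and your argument for (v) is exactly a proof of \cref{thm: joinK1} followed by iteration, which is what the paper has in mind. For (ii)--(iv), however, what you have written are outlines rather than proofs, and you say so yourself (``the bookkeeping\ldots is where the work lies'', ``I expect this casework to be the main obstacle''). In particular, the inductive step you sketch for cycles does not obviously close: splitting on whether a distinguished vertex receives the asymmetric color $c$ reduces one branch to a path (where the greedy bound is tight) but the other branch is still an $m$-assignment on $C_n$, not on a shorter cycle, so the recursion you wrote does not directly apply to $P(C_n,L)$; the actual argument in the literature (Kostochka--Sidorenko) is more delicate. Likewise for (iii) and (iv), conditioning on the hub colors and citing path/cycle bounds gives the right skeleton, but showing the constant assignment is extremal over all configurations of $L(x),L(y)$ is the entire content, and nothing you have written engages with it. These parts would need to be filled in or replaced by citations, as the paper does.
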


Statement~(v) is a consequence of the following more general result.

\begin{thm}[\cite{KM18}]\label{thm: joinK1}
Let $G$ be a graph such that $P_{\ell}(G,m)=P(G,m)$. Then $P_{\ell}(G\vee K_1,m+1)=P(G\vee K_1,m+1)$. 
\end{thm}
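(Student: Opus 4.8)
The plan is to fix an arbitrary $(m+1)$-assignment $L'$ for $G \vee K_1$ and show directly that it admits at least $P(G\vee K_1, m+1)$ proper colorings; since $P_{\ell}(G\vee K_1, m+1) \le P(G\vee K_1, m+1)$ holds automatically, this yields equality and hence the claim. Write $v$ for the vertex of $K_1$, so that $v$ is adjacent in $G\vee K_1$ to every vertex of $G$. The first step is to invoke the identity $P(K_1 \vee G, m+1) = (m+1) P(G,m)$ stated in Section~\ref{intro} (valid for all $m \in \N$), which reduces the target to the bound $P(G\vee K_1, L') \ge (m+1) P(G,m)$.

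Next I would partition the proper $L'$-colorings of $G \vee K_1$ according to the color assigned to $v$. For each $c \in L'(v)$, a proper $L'$-coloring with $f(v)=c$ restricts to a proper coloring of $G$ from the list assignment $L'_c$ defined by $L'_c(u) = L'(u) \setminus \{c\}$ for $u \in V(G)$, and conversely every proper $L'_c$-coloring of $G$ extends uniquely. Hence $P(G\vee K_1, L') = \sum_{c \in L'(v)} P(G, L'_c)$.

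The key observation is a monotonicity fact: if $L$ is any list assignment for a graph $H$ with $|L(u)| \ge t$ for all $u \in V(H)$, then $P(H,L) \ge P_{\ell}(H,t)$. Indeed, deleting colors from lists can only weakly decrease the number of proper colorings, so passing to sublists $L''(u)\subseteq L(u)$ with $|L''(u)| = t$ gives $P(H,L) \ge P(H, L'') \ge P_{\ell}(H,t)$. Applying this to each $L'_c$, which satisfies $|L'_c(u)| \ge m$ because $|L'(u)| = m+1$, and then using the hypothesis $P_{\ell}(G,m) = P(G,m)$, yields $P(G, L'_c) \ge P(G,m)$ for every $c \in L'(v)$. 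Summing over the $m+1$ colors of $L'(v)$ gives $P(G\vee K_1, L') \ge (m+1)P(G,m) = P(G \vee K_1, m+1)$, which completes the argument.

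I do not expect a genuine obstacle here; the proof is short. The one point requiring care is that the sublists $L'_c$ have size at least $m$ rather than exactly $m$, which is precisely why the monotonicity observation is needed rather than a direct appeal to the definition of $P_{\ell}$.
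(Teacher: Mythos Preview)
Your argument is correct. The partition by the color of the $K_1$-vertex, the monotonicity step (passing to sublists of size exactly $m$), and the application of the hypothesis $P_{\ell}(G,m)=P(G,m)$ are all sound, and together yield $P(G\vee K_1,L') \ge (m+1)P(G,m) = P(G\vee K_1,m+1)$ as needed.

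Note, however, that the paper does not actually prove \cref{thm: joinK1}; it is quoted from~\cite{KM18} and used as a black box. So there is no in-paper proof to compare your approach against. That said, your argument is the standard one and is essentially the proof given in the cited reference: condition on the color of the universal vertex, observe that the residual lists on $V(G)$ have size at least $m$, and invoke the hypothesis after shrinking to $m$-lists. Your explicit remark about why the monotonicity observation is necessary (since $|L'_c(u)|$ may equal $m+1$ rather than $m$) is a nice touch.
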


With this and \cref{thm: Ohba} in mind we conjecture the following.

\begin{conj} \label{conj: join}
For any graph $G$, there is a $p \in \N$ such that $G \vee K_p$ is enumeratively chromatic-choosable.    
\end{conj}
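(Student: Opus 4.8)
The plan is to reduce the conjecture to a statement about DP-coloring (correspondence coloring), since the abstract advertises DP-coloring techniques and Theorem~\ref{thm: thresh} already bounds $\tau(G)$ in terms of $|E(G)|$. First I would recall that for the join $G \vee K_p$ one has $\chi(G \vee K_p) = \chi(G) + p$, and that a proper $m$-coloring (or $L$-coloring) of $G \vee K_p$ amounts to choosing $p$ distinct colors for the clique and then properly coloring $G$ while avoiding those $p$ colors on each vertex. So the natural quantity to control is, for each way of coloring the $K_p$-part, the number of proper colorings of $G$ from the ``leftover'' lists, and one wants the worst case over $m$-assignments to coincide with the uniform case. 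By iterating Theorem~\ref{thm: joinK1}, it suffices to show that \emph{some} $p$ makes $P_\ell(G \vee K_p, \chi(G)+p) = P(G \vee K_p, \chi(G)+p)$; equivalently, by the threshold machinery, that $\tau(G \vee K_p) = \chi(G) + p$ for large $p$, and then Theorem~\ref{thm: joinK1} propagates equality to all larger $m$ automatically.

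The key steps, in order, would be: (1) Fix an $m$-assignment $L$ for $G \vee K_p$ with $m = \chi(G) + p$ minimizing $P(G \vee K_p, L)$, and set up the bijective correspondence between proper $L$-colorings and pairs (a rainbow coloring $c$ of $K_p$, a proper coloring of $G$ from the lists $L(v) \setminus c(V(K_p))$). (2) Observe that when $p$ is large relative to $|E(G)|$ (say $p \ge |E(G)|$ so that $m = \chi(G)+p$ exceeds the Dai--Zhang threshold bound $|E(G)|-1$ for $G$), the leftover lists on $V(G)$ always have size $\ge m - p = \chi(G)$, and moreover the list color function of $G$ has already stabilized at every value $\ge m-p$; this is where Theorem~\ref{thm: thresh} enters. (3) Use a counting/averaging argument over all choices of the rainbow coloring $c$: group the colors so that, after summing over $c$, the total count of extensions is expressed through $P_\ell(G, \cdot)$ evaluated at values where it equals $P(G,\cdot)$, and show this forces $P(G \vee K_p, L) \ge P(G \vee K_p, m)$. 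The reverse inequality $P_\ell \le P$ is automatic, giving equality. (4) Convert the single-value equality at $m=\chi(G)+p$ into enumerative chromatic-choosability by invoking Theorem~\ref{thm: joinK1} (noting $G \vee K_p = (G \vee K_{p-1}) \vee K_1$, etc.) to push equality up to all $m \ge \chi(G)+p$, and handle the finitely many values $\chi(G)+p < m$ — actually these are exactly the ones Theorem~\ref{thm: joinK1} covers — so the only real content is the base case.

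The main obstacle I expect is step~(3): controlling the \emph{worst-case} $m$-assignment for $G \vee K_p$. The difficulty is that an adversary's list assignment need not restrict to the same lists on $V(G)$ for every rainbow coloring $c$ of the clique — the leftover list $L(v) \setminus c(V(K_p))$ genuinely depends on $c$, so one cannot simply factor the count as (number of rainbow colorings) $\times$ (a fixed $P_\ell(G,\cdot)$ value). A clean way around this is to pass to DP-coloring: the number of proper $L$-colorings of $G \vee K_p$ is bounded below by a DP-coloring count of $G$ with an induced cover whose ``multiplicity'' structure is forced, uniformly in $c$, once $p$ is large; the known fact that the DP color function of $G$ stabilizes (again governed by a bound of the shape $|E(G)|-1$) then does the averaging for us without needing $c$-uniformity. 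Making the cover bookkeeping precise — in particular verifying that enlarging $p$ genuinely ``washes out'' the dependence on which colors the clique grabbed — is the crux, and I would expect it to require a careful double-counting of ordered pairs (clique coloring, $G$-coloring) rather than a slick one-liner.
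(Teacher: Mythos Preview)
This statement is an open conjecture in the paper --- no proof is offered, only the special case $G=\Theta(2,2,2k)$, $p=1$ (Theorem~\ref{thm: thetagraph}) --- and your outline does not close it. The central gap is in step~(4): Theorem~\ref{thm: joinK1} sends equality at $(H,m)$ to equality at $(H\vee K_1,\,m+1)$, changing the graph as well as the parameter; it does \emph{not} give $P_\ell(H,m+1)=P(H,m+1)$ for the same $H$. So proving $P_\ell(G\vee K_p,\chi(G)+p)=P(G\vee K_p,\chi(G)+p)$ only yields \emph{weak} enumerative chromatic-choosability; whether $\nu=\tau$ always is precisely the open Question~\ref{ques: threshold}. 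To reach $(G\vee K_p,m)$ for $m>\chi(G)+p$ by iterating Theorem~\ref{thm: joinK1} you would have to start from $(G,m-p)$ with $m-p$ ranging over every integer $\ge \chi(G)$, i.e., you would need $G$ itself to be enumeratively chromatic-choosable, which you cannot assume.

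Steps~(2)--(3) have the same circularity. With $m=\chi(G)+p$, once the clique takes its $p$ colors the leftover lists on $V(G)$ can be as short as $m-p=\chi(G)$; your claim that ``the list color function of $G$ has already stabilized at every value $\ge m-p$'' would mean $\tau(G)\le\chi(G)$, i.e., that $G$ is already enumeratively chromatic-choosable --- but the Dai--Zhang bound you cite controls $\tau(G)$ by $|E(G)|-1$, not by $\chi(G)$, and the paper exhibits graphs (Lemma~\ref{cor: Theta222tcc}, Proposition~\ref{pro: counterexample}) where $P_\ell(G,\chi(G))<P(G,\chi(G))$. The DP-coloring detour does not repair this: $P_{DP}\le P_\ell$, so passing to DP only weakens the lower bound, and the DP color function does \emph{not} stabilize to the chromatic polynomial in general --- the paper itself computes $P(G,m)-P_{DP}(G,m)=(m-1)^{2k}+2(m-1)^2+(m-1)-2>0$ for $G=\Theta(2,2,2k)$ in the proof of Lemma~\ref{lem: allbut3}. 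The ``washing out'' you hope for does not occur, and the conjecture remains open.
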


Any graph that is weakly enumeratively chromatic-choosable is chromatic-choosable.  Interestingly, there are few known examples of graphs that are chromatic-choosable and not weakly enumeratively chromatic-choosable.  In 2009, Carsten Thomassen showed the following which is illustrated in~\cref{fig:t224}.
\begin{pro}\label{lem: Theta224cc}\cite{T09}
 $\Theta(2,2,4)$ is chromatic-choosable, but it is not weakly enumeratively chromatic-choosable.
\end{pro}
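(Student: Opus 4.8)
The plan is to treat the two assertions separately. For chromatic-choosability, first observe that $\Theta(2,2,4)$ is connected and bipartite --- every cycle in it has length $2+2$, $2+4$, or $2+4$, all even --- so $\chi(\Theta(2,2,4)) = 2$. To see that it is $2$-choosable I would invoke the Erd\H{o}s--Rubin--Taylor characterization of $2$-choosable graphs: a connected graph is $2$-choosable precisely when the graph obtained by iteratively deleting vertices of degree one is $K_1$, an even cycle, or a generalized theta graph $\Theta(2,2,2k)$ for some $k \geq 1$. Since $\Theta(2,2,4)$ has minimum degree $2$ it equals its own core, and $\Theta(2,2,4) = \Theta(2,2,2 \cdot 2)$ lies in the last family; hence $\chi_{\ell}(\Theta(2,2,4)) = 2 = \chi(\Theta(2,2,4))$, i.e., $\Theta(2,2,4)$ is chromatic-choosable.

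For the second assertion it suffices, since $\chi(\Theta(2,2,4)) = 2$, to produce a single $2$-assignment $L$ with $P(\Theta(2,2,4), L) \neq P(\Theta(2,2,4), 2)$, as this already forces $\nu(\Theta(2,2,4)) > 2$. Because $\Theta(2,2,4)$ is connected and bipartite, $P(\Theta(2,2,4), 2) = 2$, and because it is $2$-choosable every $2$-assignment admits at least one proper coloring; so the task reduces to exhibiting a $2$-assignment under which the proper coloring is unique.

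Label the two branch vertices $u, v$, let $x, y$ be the internal vertices of the two paths of length two (so $uxv$ and $uyv$ are paths), and let $u, z_1, z_2, z_3, v$ be the path of length four. I would take $L(u) = \{1,2\}$, $L(v) = \{1,3\}$, $L(x) = \{2,3\}$, $L(y) = \{1,2\}$, $L(z_1) = \{1,2\}$, $L(z_2) = \{2,3\}$, $L(z_3) = \{1,3\}$, and verify that the only proper $L$-coloring assigns $1,2,2,2,3$ to $u,x,y,z_1,z_2$ and $3,1$ to $v,z_3$. This is a bounded case analysis over the four colorings of the pair $(u,v)$, which lie in $\{1,2\} \times \{1,3\}$: the choices $(2,1)$ and $(2,3)$ leave respectively $y$ and $x$ with no admissible color; the choice $(1,1)$ forces $z_1 = 2$ and $z_3 = 3$, leaving $z_2$ uncolorable; and for the remaining choice $(1,3)$ the colors of $x, y, z_1, z_3$ are each immediately forced and then that of $z_2$ follows. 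Hence $P_{\ell}(\Theta(2,2,4), 2) = 1 \neq 2 = P(\Theta(2,2,4), 2)$, so $\nu(\Theta(2,2,4)) \geq 3 > \chi(\Theta(2,2,4))$ and $\Theta(2,2,4)$ is not weakly enumeratively chromatic-choosable.

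The only delicate point is the design of this list assignment --- one must arrange that exactly one coloring of the branch pair survives and that it then propagates to a unique coloring of the whole graph --- but once the lists are written down everything is a finite check. An alternative to giving explicit lists would be to quote the Mahdian--Mahmoodian classification of uniquely $2$-list-colorable graphs, since $\Theta(2,2,4)$ is $2$-connected and is neither complete, a cycle, nor a complete bipartite graph; however, an explicit $L$ keeps the argument self-contained.
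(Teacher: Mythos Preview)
Your proof is correct and follows essentially the same approach as the paper: the paper does not give a formal proof of this proposition (it is cited to Thomassen) but illustrates it via \cref{fig:t224}, which displays a $2$-assignment with a unique proper $L$-coloring, and the chromatic-choosability follows from the Erd\H{o}s--Rubin--Taylor characterization (\cref{thm: 2choosable}). Your explicit list assignment is the same as the one in \cref{fig:t224} up to swapping the roles of the two branch vertices and a permutation of color names, and your case analysis verifying uniqueness is sound.
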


\begin{figure}[htb]
    \begin{center}
     \begin{tikzpicture}
        [scale=1,auto=left,every node/.style={circle, fill, inner sep=0 pt, minimum size=2mm, outer sep=0pt},line width=.4mm]
        \node (u) at (0,1.75)[label={[label distance=2pt]left:{$\{1,3\}$}}] {};
        \node (v) at (4,1.75)[label={[label distance=2pt]right:{$\{1,2\}$}}]{};
        \node (1) at (2,3.5)[label={[label distance=-7pt]above:{$\{1,2\}$}}]{};
        \node (2) at (2,1.75)[label={[label distance=-7pt]above:{$\{2,3\}$}}]{};
        \node (3) at (1,0) [label={[label distance=-7pt]below:{$\{1,3\}$}}]{};
        \node (4) at (2,0)[label={[label distance=-7pt]below:{$\{2,3\}$}}]{};
        \node (5) at (3,0)[label={[label distance=-7pt]below:{$\{1,2\}$}}]{};
        \begin{scope}[on background layer]
        \draw[line width=.4mm] (u.center) to (1.center) to (v.center) to (5.center) to (4.center) to (3.center) to (u.center) to (2.center) to (v.center);
        \end{scope}
        \end{tikzpicture}
        \end{center}
        \caption{A 2-assignment for a copy of $\Theta(2,2,4)$ with exactly one proper coloring}
        \label{fig:t224}
\end{figure}
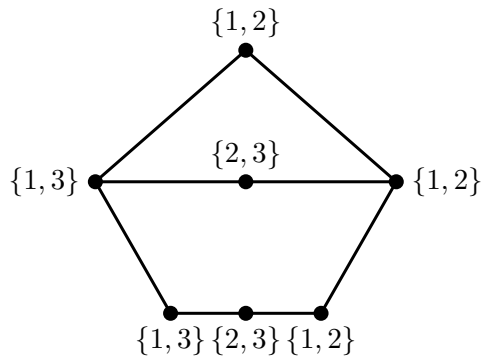

In \cref{cor: Theta222tcc}, we generalize this to $\Theta(2,2,2k)$, and in \cref{pro: counterexample} we show that while $K_{2,2,4}$ is chromatic-choosable, it is not weakly enumeratively chromatic-choosable.  In general, it is challenging to determine which graphs are enumeratively chromatic-choosable and few such graphs are known.  Consequently, we will keep the following two questions in mind for the remainder of the paper.

\begin{ques} \label{ques: ecc}
Which graphs are enumeratively chromatic-choosable?
\end{ques}  

\begin{ques} \label{ques: notecc}
For each $k \geq 2$, does there exist a chromatic-choosable graph $G$ such that $\chi(G) = k$ and $G$ is not weakly enumeratively chromatic-choosable?
\end{ques} 

\cref{cor: Theta222tcc} and \cref{pro: counterexample} imply that the answer to \cref{ques: notecc} is yes when $k=2$ and $k=3$.

\subsection{Outline of Paper}

We now present an outline of the paper while mentioning some open questions.  In \cref{char}, we completely characterize the enumeratively chromatic-choosable graphs with chromatic number 2.  
Following~\cite{ET79}, we say that the \emph{core} of a connected graph $G$ is the graph obtained from $G$ by successively deleting vertices of degree 1.
Note that the core of a tree is a $K_1$.

\begin{restatable}{thm}{thmcharacterize} \label{thm: characterize}
Suppose $G$ is a connected graph with $\chi(G)=2$.  We have that $G$ is enumeratively chromatic-choosable if and only if the core of $G$ is a copy of: $K_1$, $C_{2k+2}$ for $k \in \N$, or $K_{2,3}$.
\end{restatable}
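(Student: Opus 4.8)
The plan is to push the problem down to the \emph{core} of $G$ by means of a pair of pendant-vertex lemmas, and then to classify which cores can occur. For a vertex $v$ of degree $1$ in a graph $H$, with neighbour $u$, one has $P(H,m)=(m-1)P(H-v,m)$, and for any $m$-assignment $L$ of $H$, $P(H,L)=\sum_f\abs{L(v)\setminus\set{f(u)}}\ge(m-1)P(H-v,L|_{H-v})\ge(m-1)P_\ell(H-v,m)$, the sum being over proper $L|_{H-v}$-colourings $f$ of $H-v$; minimising over $L$ gives $P_\ell(H,m)\ge(m-1)P_\ell(H-v,m)$. Hence (A) if $H-v$ is enumeratively chromatic-choosable then so is $H$, since $P(H,m)=(m-1)P(H-v,m)=(m-1)P_\ell(H-v,m)\le P_\ell(H,m)\le P(H,m)$; and conversely (B) if $H-v$ is \emph{not} enumeratively chromatic-choosable then neither is $H$: taking $m$ and a bad $m$-assignment $L'$ of $H-v$ with $P(H-v,L')<P(H-v,m)$ and extending it by $L(v):=L'(u)$ forces every proper $L'$-colouring of $H-v$ to extend to $v$ in exactly $m-1$ ways, so $P(H,L)=(m-1)P(H-v,L')<(m-1)P(H-v,m)=P(H,m)$. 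Since $G$ is built from its core by successively adding pendant vertices, (A) reduces the ``if'' direction to showing that $K_1$, $C_{2k+2}$, and $K_{2,3}$ are enumeratively chromatic-choosable, while (B) reduces the ``only if'' direction to showing that the core of an enumeratively chromatic-choosable connected bipartite graph is $K_1$, $C_{2k+2}$, or $K_{2,3}$.

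For the ``if'' direction: a graph with core $K_1$ is a tree, hence chordal and so enumeratively chromatic-choosable by \cref{thm: examples}(i), and cores $C_{2k+2}$ and $K_{2,3}$ are covered by \cref{thm: examples}(ii) and (iii); now invoke (A). For the ``only if'' direction, let $G$ be enumeratively chromatic-choosable. If $G$ is a tree its core is $K_1$. Otherwise $G$ contains a cycle, so its core $H$ is a connected bipartite graph with $\delta(H)\ge2$ (in particular $\abs{V(H)}\ge4$), and $H$ is enumeratively chromatic-choosable by (B). Since $H$ is connected and bipartite, $P(H,2)=2$, hence $P_\ell(H,2)=2$; in particular every $2$-assignment of $H$ has a proper colouring, so $H$ is $2$-choosable. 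By the Erd\H{o}s, Rubin, and Taylor characterisation of $2$-choosable graphs~\cite{ET79}, a connected graph equal to its own core that is $2$-choosable must be $K_1$, an even cycle $C_{2k+2}$, or a generalised theta graph $\Theta(2,2,2k)$ for some $k\ge1$ (and $\Theta(2,2,2)=K_{2,3}$). The cases $C_{2k+2}$ and $k=1$ are as desired, so the only case left is $H=\Theta(2,2,2k)$ with $k\ge2$, and excluding it is the main obstacle.

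To handle it I would generalise Thomassen's $2$-assignment from \cref{fig:t224}; the resulting statement is \cref{cor: Theta222tcc}, which may be invoked directly if proved earlier. Writing the two length-$2$ paths as $u,a,v$ and $u,b,v$ and the length-$2k$ path as $u=w_0,w_1,\dots,w_{2k}=v$, put $L(u)=L(w_1)=\dots=L(w_{2k-3})=\set{1,3}$, $L(w_{2k-2})=L(b)=\set{2,3}$, and $L(w_{2k-1})=L(a)=L(v)=\set{1,2}$. In any proper $L$-colouring $c$, colourability of $a$ forces $\set{c(u),c(v)}\ne\set{1,2}$ and colourability of $b$ forces $\set{c(u),c(v)}\ne\set{2,3}$, which (using $c(u)\in\set{1,3}$, $c(v)\in\set{1,2}$) leaves only $(c(u),c(v))\in\set{(1,1),(3,1)}$; but when $c(v)=1$ the colours of $w_{2k-1},w_{2k-2},\dots,w_0$ are forced in turn to $2,3,1,3,1,\dots$, which (since $k\ge2$) ends with $c(u)=3$, so $(c(u),c(v))=(3,1)$, and then $c(a)=c(b)=2$ and the colouring of the long path are all forced. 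Hence this $2$-assignment admits exactly one proper colouring, so $P_\ell(\Theta(2,2,2k),2)\le1<2=P(\Theta(2,2,2k),2)$, contradicting enumerative chromatic-choosability. I expect essentially all the difficulty to be concentrated in finding and verifying this last family of $2$-assignments; everything else is routine once the two pendant-vertex reductions and the Erd\H{o}s--Rubin--Taylor theorem are in hand.
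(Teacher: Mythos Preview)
Your proposal is correct and follows essentially the same approach as the paper: pendant-vertex reduction (the paper states it as the equality $P_\ell(G',m)=(m-1)P_\ell(G,m)$ in \cref{lem: pendant}, which packages your (A) and (B) together), the Erd\H{o}s--Rubin--Taylor classification, and an explicit bad $2$-assignment for $\Theta(2,2,2k)$ with $k\ge 2$ (\cref{cor: Theta222tcc}). The only organizational difference is that for the ``only if'' direction the paper extends the bad $2$-assignment from the core out to all of $G$ by copying neighbours' lists, whereas you first pass from $G$ to its core via (B) and then invoke \cref{cor: Theta222tcc} on the core itself; your route is marginally cleaner, and your specific $2$-assignment on the long path (many $\{1,3\}$'s, then one $\{2,3\}$, then one $\{1,2\}$) differs from the paper's for $k\ge 3$ but works just as well.
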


Interestingly, the enumeratively chromatic-choosable graphs with chromatic number 2 are the same as the chromatic-paintable graphs with chromatic number 2 (see~\cite{Z09}).~\footnote{Chromatic-paintability is an online version of list coloring that was introduced by Xuding Zhu~\cite{Z09}.}

In \cref{theta} we consider the list color function of $G = \Theta(2,2,2k)$ where $k \geq 2$.  \cref{cor: Theta222tcc} tells us $G$ is chromatic-choosable, but not weakly chromatic-choosable.  Also, recall that for any graph $G$, there is an $N \in \N$ such that $G \vee K_N$ is chromatic-choosable.  Consequently, it is natural to consider \cref{conj: join} when $G = \Theta(2,2,2k)$ where $k \geq 2$.  We use the AM-GM Inequality and DP-coloring to prove the following theorem that such a $G$ is close to being enumeratively chromatic-choosable. 

\begin{restatable}{thm}{thmthreeandabove} \label{thm: 3andabove}
Suppose $G = \Theta(2,2,2k)$ with $k \geq 2$.  Then, $P_{\ell}(G,m) = P(G,m)$ whenever $m \geq 3$.
\end{restatable}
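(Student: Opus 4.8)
The plan is to fix an integer $m\ge 3$ and an arbitrary $m$-assignment $L$ for $G$, and to prove $P(G,L)\ge P(G,m)$; the reverse inequality always holds. Write $u,v$ for the two vertices of degree $3$, write $x_1,x_2$ for the interiors of the two paths of length $2$, and write $y_1,\dots,y_{2k-1}$ for the interior of the path of length $2k$. The structural fact driving the argument is that $G-x_2$ is the cycle $C=C_{2k+2}$, on which $u$ and $v$ lie at distance $2$ (through $x_1$) and at distance $2k$ (through the $y_i$). First I would sum over proper $L$-colorings $f$ of $C$ and then extend to $x_2$; since $|L(x_2)\setminus\{f(u),f(v)\}|\ge m-2$ always, and $\ge m-1$ when $f(u)=f(v)$,
\[
P(G,L)=\sum_{f}\bigl|L(x_2)\setminus\{f(u),f(v)\}\bigr|\ \ge\ (m-2)\,P(C,L)+P^{=}_L ,
\]
where $P^{=}_L$ denotes the number of proper $L$-colorings of $C$ with $f(u)=f(v)$. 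Both inequalities used here are equalities when $L$ is a constant list assignment, so evaluating the same bound on constant lists gives $P(G,m)=(m-2)P(C,m)+m(m-1)s_{2k}$, where $s_{2k}=\frac{(m-1)^{2k}+(m-1)}{m}$ is the number of proper $m$-colorings of the interior of a path of length $2k$ whose two endpoints are precolored with a single common color. Because cycles are enumeratively chromatic-choosable (\cref{thm: examples}(ii)), $P(C,L)\ge P(C,m)$, so the theorem reduces to proving
\[
(m-2)\bigl(P(C,L)-P(C,m)\bigr)+P^{=}_L\ \ge\ m(m-1)s_{2k}.
\]

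To handle $P^{=}_L$, decompose $C$ at $u$ and $v$: $P^{=}_L=\sum_{c\in L(u)\cap L(v)}|L(x_1)\setminus\{c\}|\cdot N_L(c)$, where $N_L(c)$ is the number of proper $L$-colorings of the path $y_1,\dots,y_{2k-1}$ with $y_1\ne c$ and $y_{2k-1}\ne c$. By induction on the path length --- conditioning on the color of an end vertex, with each endpoint restriction carried along as a shrunken list --- one shows that for any path with $m$-lists and one common forbidden color at each end, the number of proper colorings is at least $s_{(\mathrm{length})+1}$; in particular $N_L(c)\ge s_{2k}$. Hence $P^{=}_L\ge j(m-1)s_{2k}$ with $j:=|L(u)\cap L(v)|$, and the displayed inequality reduces to the surplus bound $(m-2)\bigl(P(C,L)-P(C,m)\bigr)\ge (m-j)(m-1)s_{2k}$: that is, $P(C,L)$ must beat its constant-list value by an amount growing linearly as the overlap of $L(u)$ and $L(v)$ shrinks.

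This surplus bound is the step I expect to be the main obstacle. Writing $P(C,L)=\sum_{a\in L(u),\,b\in L(v)}|L(x_1)\setminus\{a,b\}|\cdot N_L(a,b)$ --- where now $N_L(a,b)$ counts colorings of the long path with $y_1\ne a$ and $y_{2k-1}\ne b$, possibly with $a\ne b$ --- one cannot compare termwise with the constant-list count: for $a\ne b$ the path count $N_L(a,b)$ can drop below its constant-list value $d_{2k}=\frac{(m-1)^{2k}-1}{m}$ (already the two-vertex path $y_1,y_2$ with lists $\{a\}\cup S$ and $\{b\}\cup S$ exhibits this behavior). The point is that any such deficit at $(a,b)$ forces $a\in L(y_1)$ and $b\in L(y_{2k-1})$, which pins down colors and creates compensating surpluses at other pairs. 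To make this quantitative I would lower-bound the family $\{N_L(a,b)\}$ in aggregate by the DP-color function of the long path relative to $L$ with the shrunken end lists --- a quantity one can evaluate exactly --- and then apply the AM--GM inequality to the products $|L(x_1)\setminus\{a,b\}|\cdot N_L(a,b)$ and $|L(x_1)\setminus\{c\}|\cdot N_L(c)$, so that the pointwise deficits are traded for a global surplus of the required size; the parameter $j$ enters through how much the lists $L(u),L(v),L(x_1)$ and the path lists can overlap simultaneously.

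After these reductions the surplus bound becomes a single polynomial inequality in $m$, with the path length $2k$ entering only through $s_{2k}$, $d_{2k}$, and powers of $m-1$, and one checks that it holds for all $m\ge 3$. It must fail at $m=2$, consistent with \cref{cor: Theta222tcc}, which says that $\Theta(2,2,2k)$ is not even weakly enumeratively chromatic-choosable. This gives $P_{\ell}(G,m)=P(G,m)$ for all $m\ge 3$.
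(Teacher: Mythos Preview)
Your reduction up to the displayed ``surplus bound''
\[
(m-2)\bigl(P(C,L)-P(C,m)\bigr)\ \ge\ (m-j)(m-1)s_{2k}
\]
is clean, and your pointwise claim $N_L(c)\ge s_{2k}$ is correct (indeed the greedy bound $(m-1)^{2k-1}$ already dominates $s_{2k}$). But the surplus bound itself is the whole difficulty, and you do not prove it: the paragraph about bounding $\{N_L(a,b)\}$ ``in aggregate'' via a DP color function and then applying AM--GM is a sketch of a hope, not an argument. In particular, you have already discarded information at the step $P(G,L)\ge (m-2)P(C,L)+P^{=}_L$: every proper coloring $f$ of $C$ with $f(u)\neq f(v)$ and $\{f(u),f(v)\}\not\subseteq L(x_2)$ extends in $\ge m-1$ ways, not $m-2$, and you have thrown that extra away; likewise the bound $P^{=}_L\ge j(m-1)s_{2k}$ ignores the extra factor when $c\notin L(x_1)$. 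Because of these losses, the surplus bound you are left with is \emph{strictly stronger} than the theorem, and it is not clear it is even true for all $L$ with $|L(u)\cap L(v)|=j$; your proposed AM--GM/DP argument would have to recover exactly the slack you discarded, and you give no mechanism for that. The claim that ``after these reductions the surplus bound becomes a single polynomial inequality in $m$'' cannot be right as stated, since $P(C,L)-P(C,m)$ depends on all of $L$, not just on $m,k,j$.

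The paper avoids this trap by never passing to a cycle surplus. When $L(u)=L(v)$ its argument is essentially yours (your $j=m$ case). When $L(u)\neq L(v)$, it splits: for $m=3$ it applies AM--GM directly to the product decomposition $P(G,L)=\sum_{(c,d)}\prod_{i=1}^3 N_i(c,d)$ over \emph{all three} paths simultaneously, using sharp pointwise bounds on the $N_i$ (\cref{lem: max}, \cref{lem: even path}); for $m\ge 4$ it compares $P(G,L)$ to the DP color function of $G$ itself via \cref{lem: DPconnection}, picking up an explicit additive term from any edge with mismatched lists (\cref{lem: greed}). Both routes keep the $x_2$-extension information that your first inequality drops, which is why they close without needing a surplus estimate on $C_{2k+2}$.
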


Then, in \cref{prob}, we show that $K_1 \vee \Theta(2,2,2k)$ is enumeratively chromatic-choosable (\cref{thm: thetagraph}) with the help of a probabilistic lemma. This provides some support for \cref{conj: join}. 
 We also show that while $K_{2,2,4}$ is chromatic-choosable, it fails to be enumeratively chromatic-choosable (\cref{pro: counterexample}) which indicates that it would be natural to study \cref{conj: join} when $G$ is a complete multipartite graph.  This is a topic for further research. 

\section{Enumeratively Chromatic-Choosable Graphs with $\chi(G)=2$} \label{char}

In this section, it is useful to keep in mind that if $G$ is a connected graph with $\chi(G)=2$, then $G$ has unique bipartition which means $P(G,2) = 2$.  In \cite{ET79}, Erd\"{o}s, Rubin, and Taylor proved the following.

\begin{thm}\label{thm: 2choosable}\cite{ET79}
Let $G$ be a connected bipartite graph.  Then $G$ is 2-choosable if and only if the core of $G$ belongs to $K_1$, $C_{2k+2}$ for $k\in \N$, or $\Theta(2,2,2k)$ for $k\ge 1$.
\end{thm}

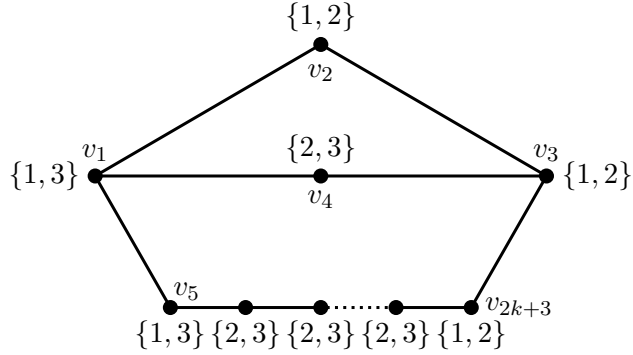
\begin{figure}[htb]
\begin{center}
     \begin{tikzpicture}
        [scale=1,auto=left,every node/.style={circle, fill, inner sep=0 pt, minimum size=2mm, outer sep=0pt},line width=.4mm]
        \node (u) at (0,1.75)[label={[label distance=2pt]left:{$\{1,3\}$}}, label=above:$v_1$] {};
        \node (v) at (6,1.75)[label={[label distance=2pt]right:{$\{1,2\}$}},label=above: $v_3$]{};
        \node (1) at (3,3.5)[label={[label distance=-7pt]above:{$\{1,2\}$}}, label={[label distance=2pt]below:$v_2$}]{};
        \node (2) at (3,1.75)[label={[label distance=-7pt]above:{$\{2,3\}$}}, label=below:$v_4$]{};
        \node (3) at (1,0) [label={[label distance=-7pt]below:{$\{1,3\}$}}, label=above right:$v_5$]{};
        \node (4) at (2,0)[label={[label distance=-7pt]below:{$\{2,3\}$}}]{};
        \node (5) at (3,0)[label={[label distance=-7pt]below:{$\{2,3\}$}}]{};
        \node (6) at (4,0)[label={[label distance=-7pt]below:{$\{2,3\}$}}]{};
        \node (7) at (5,0) [label={[label distance=2pt]right:$v_{2k+3}$},label={[label distance=-7pt]below:{$\{1,2\}$}}]{};
        \begin{scope}[on background layer]
        \draw[line width=.4mm] (u.center) to (1.center) to (v.center) to (7.center) to (6.center); 
        \draw[line width=.4mm, dotted] (6.center) to (5.center);
        \draw[line width=.4mm] (5.center) to (4.center) to (3.center) to (u.center) to (2.center) to (v.center);
        \end{scope}
        \end{tikzpicture}
        \end{center}
        \caption{A 2-assignment for a copy of $\Theta(2,2,2k)$ with exactly one proper coloring}
        \label{fig:k222t}
\end{figure}

\begin{lem}\label{cor: Theta222tcc}
For each $k \ge 2$, $\Theta(2,2,2k)$ is chromatic-choosable, but not weakly enumeratively chromatic-choosable.
\end{lem}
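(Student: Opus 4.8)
The plan is to establish two separate facts: that $\Theta(2,2,2k)$ is chromatic-choosable, and that it is not weakly enumeratively chromatic-choosable. For the first, note that $\chi(\Theta(2,2,2k)) = 2$ since the graph is bipartite and contains an edge. Its core is itself (no vertices of degree $1$), so by \cref{thm: 2choosable} it is $2$-choosable; hence $\chi_\ell(\Theta(2,2,2k)) = 2 = \chi(\Theta(2,2,2k))$, giving chromatic-choosability. This part is essentially immediate.

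The heart of the argument is showing that $\Theta(2,2,2k)$ is not weakly enumeratively chromatic-choosable, i.e., that $P_\ell(G,2) < P(G,2)$. Since $G$ is connected and bipartite it has a unique bipartition, so $P(G,2) = 2$. It therefore suffices to exhibit a single $2$-assignment $L$ for which $P(G,L) \leq 1$. I would take the assignment drawn in \cref{fig:k222t}: name the two branch vertices $v_1, v_3$, let $v_1 v_2 v_3$ and $v_1 v_4 v_3$ be the two paths of length $2$, and let $v_1 v_5 v_6 \cdots v_{2k+3} v_3$ be the path of length $2k$. Assign $L(v_1) = \{1,3\}$, $L(v_3) = \{1,2\}$, $L(v_2) = \{1,2\}$, $L(v_4) = \{2,3\}$, $L(v_5) = \{1,3\}$, and $L(v_i) = \{2,3\}$ for the remaining internal vertices $v_6, \ldots, v_{2k+2}$, with $L(v_{2k+3}) = \{1,2\}$.

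The key step is the case analysis verifying this $L$ has at most one proper $L$-coloring. First, color $v_1$: if $f(v_1) = 3$, then the path-of-length-$2$ constraints force $f(v_2) \in \{1,2\}$ and $f(v_4) = 2$, and then $f(v_3) \in \{1,2\}\setminus\{f(v_2)\}$ must also avoid $f(v_4)=2$, so $f(v_3) = 1$ and $f(v_2) = 2$; meanwhile $f(v_5) = 1$, and then $v_6, \ldots, v_{2k+2}$ each lie in $\{2,3\}$ and are forced to alternate $3,2,3,\ldots$ starting from the neighbor of $v_5$, with $v_{2k+3}$ forced into $\{1,2\}$ differently from $v_3 = 1$, i.e., $f(v_{2k+3}) = 2$; I need to check that the parity of the length-$2k$ path makes this alternation consistent (it should, by the way the $\{2,3\}$ lists are placed), yielding exactly one coloring in this branch. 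Second, if $f(v_1) = 1$, then $f(v_2) = 2$, $f(v_4) \in \{2,3\}$, and $f(v_3) \in \{1,2\}\setminus\{2\} = \{1\}$ — but $f(v_3) = 1 = f(v_1)$ is fine since they are nonadjacent, however $f(v_3)=1$ and $f(v_4)$ must differ so $f(v_4) \in \{2,3\}$ freely at this point; continuing down the long path, $f(v_5) \in \{1,3\}\setminus\{1\} = \{3\}$, and then the forced alternation on $\{2,3\}$ together with the terminal list $\{1,2\}$ at $v_{2k+3}$ must clash with $f(v_3) = 1$, killing this branch. The main obstacle is bookkeeping the parity along the length-$2k$ path to confirm exactly one branch survives; this is the one spot where the specific placement of the $\{1,3\}$ list at $v_5$ versus $\{2,3\}$ elsewhere matters, and it is exactly where the evenness of $2k$ (as opposed to the odd path in a $2$-choosable core) gets used. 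Once that is pinned down, $P(G,L) = 1 < 2 = P(G,2)$, so $\nu(G) > 2 = \chi(G)$, completing the proof.
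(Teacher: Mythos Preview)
Your proposal is correct and follows essentially the same approach as the paper: the same 2-assignment from \cref{fig:k222t} is used, and the case split on the color of $v_1$ is the same. The only cosmetic differences are that the paper handles $k=2$ by citing \cref{lem: Theta224cc} (though your uniform argument already covers it), and the paper first exhibits the proper $L$-coloring explicitly and then argues uniqueness by contradiction, whereas you derive the unique coloring directly from the case analysis. One small wording issue: in the $f(v_1)=3$ branch you say the $\{2,3\}$-alternation is ``forced'' starting from $v_6$, but $v_5=1$ does not force $v_6$; the determination actually comes from $f(v_{2k+3})=2$ propagating backward, which you do note --- just make that direction of forcing explicit when you write it up.
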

\begin{proof}
Let $G=\Theta(2,2,2k)$ for some $k\ge 2$.  
By \cref{thm: 2choosable}, it follows that $\chi_{\ell}(G)=\chi(G)=2$.  
So, $G$ is chromatic-choosable.
Let the three paths of $G$ be $S_1=v_1,v_2,v_3$,  $S_2=v_1,v_4,v_3$, and $S_3=v_1,v_5, v_6,\dots, v_{2k+3}, v_3$.
If $k=2$, the desired result follows by \cref{lem: Theta224cc}.  We may therefore assume that $k\ge 3$.
Let $L$ be the $2$-assignment for $G$ be defined by
$L(v_1)=\{1,3\}$, $L(v_2)=\{1,2\}$, $L(v_3)=\{1,2\}$, $L(v_4)=\{2,3\}$, $L(v_5)=\{1,3\}$, $L(v_i)=\{2,3\}$ for $6\le i \le 2k+2$, and $L(v_{2k+3})=\{1,2\}$. 

Let $f$ be the $L$-coloring given by $f(v_1)=3$, $f(v_4)=2$, $f(v_3)=1$, $f(v_2)=2$, $f(v_5)=1$, $f(v_{2k+3})=2$ $f(v_i)=3$ for even $i\in \{6,7,\dots, 2k+2\}$, and $f(v_j)=2$ for odd $j\in\{6,7,\dots, 2k+2\}$.
Clearly, this is a proper $L$-coloring.

We claim that $f$ is the only proper $L$-coloring of $G$.
Suppose for a contradiction that there is a proper $L$-coloring $g$ distinct from $f$.
We claim that $g(v_1)=3$.  To see why, suppose for a contradiction that $g(v_1)=1$. 
Then $g(v_2)=2$, $g(v_3)=1$, $g(v_{2k+3})=2$, $g(v_i)=3$ for even $i\in\{6,7,\dots, 2k+2\}$ and $g(v_j)=2$ for odd $j\in \{6,7,\dots, 2k+2\}$.
Since $g(v_6)=3$, $g(v_1)=1$, and $g(v_5)\in\{1,3\}$, it follows that $g$ is not a proper $L$-coloring. 
So $g(v_1)=3=f(v_1)$, and
it is easy to see this implies $g=f$ which is a contradiction.

Thus, $P_{\ell}(G,2) \leq P(G,L) = 1 < 2 = P(G,2)$.  The conclusion follows.
\end{proof}

Suppose that $G$ is a graph and $z \in V(G)$ has degree 1.  It is well-known that $P(G,m) = (m-1)P(G-z,m)$ for each $m \in \N$ (see~\cite{W01}).  We will now show that an analogous identity holds for the list color function.

\begin{lem} \label{lem: pendant}
Suppose $G$ is a graph and $m \in \N$.  Suppose $G'$ is the graph obtained from $G$ by adding a new vertex $z$ to $G$ and connecting $z$ to exactly one vertex in $G$.  Then, $P_{\ell}(G',m) = (m-1)P_{\ell}(G,m)$.
\end{lem}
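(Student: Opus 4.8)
The plan is to establish the two inequalities $P_{\ell}(G',m) \le (m-1)P_{\ell}(G,m)$ and $P_{\ell}(G',m) \ge (m-1)P_{\ell}(G,m)$ separately. Throughout, let $v$ denote the unique vertex of $G$ adjacent to $z$ in $G'$. The basic observation is that for any list assignment $L'$ of $G'$, a proper $L'$-coloring of $G'$ is obtained by first properly coloring $G$ with respect to $L := L'|_{V(G)}$ and then choosing a color for $z$ from $L'(z)$ different from the color on $v$; conversely, every proper $L'$-coloring of $G'$ arises this way exactly once. Hence
\[
P(G',L') = \sum_{f} \bigl| L'(z) \setminus \{f(v)\} \bigr|,
\]
where the sum ranges over all proper $L$-colorings $f$ of $G$.

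For the lower bound, let $L'$ be an arbitrary $m$-assignment for $G'$ and put $L = L'|_{V(G)}$, an $m$-assignment for $G$. Since $|L'(z)| = m$, every summand in the displayed identity is at least $m-1$, so $P(G',L') \ge (m-1)P(G,L) \ge (m-1)P_{\ell}(G,m)$. As $L'$ was arbitrary, $P_{\ell}(G',m) \ge (m-1)P_{\ell}(G,m)$.

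For the upper bound — the one place a small idea is needed — let $L$ be an $m$-assignment for $G$ with $P(G,L) = P_{\ell}(G,m)$, and extend it to an $m$-assignment $L'$ for $G'$ by declaring $L'(z) = L(v)$; this is a legitimate $m$-assignment precisely because $|L(v)| = m$. The key point is that every proper $L$-coloring $f$ of $G$ satisfies $f(v) \in L(v) = L'(z)$, so each summand of the displayed identity is exactly $m-1$. Therefore $P(G',L') = (m-1)P(G,L) = (m-1)P_{\ell}(G,m)$, which gives $P_{\ell}(G',m) \le (m-1)P_{\ell}(G,m)$.

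Combining the two inequalities yields the claimed equality. I do not expect any real obstacle: the only non-routine step is noticing that taking $L'(z)$ to be $L(v)$ itself (rather than some arbitrary $m$-set) forces the "avoid one color on $v$" count to be uniformly $m-1$, which works because $v$ has only $m$ colors available and hence any color it could receive lies in $L(v)$. Degenerate cases such as $m = 1$, where both sides vanish, are handled automatically by the same computation.
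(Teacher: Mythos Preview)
Your proof is correct and follows essentially the same approach as the paper: both prove the two inequalities by (i) restricting an extremal (or arbitrary) $m$-assignment for $G'$ to $G$ for the lower bound, and (ii) extending an extremal $m$-assignment for $G$ by setting $L'(z)=L(v)$ for the upper bound. Your explicit summation formula for $P(G',L')$ is a slight expository addition, but the underlying argument is identical.
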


\begin{proof}
Suppose $x \in V(G)$ is the vertex to which $z$ is adjacent. Suppose $L$ is an $m$-assignment such that $P_{\ell}(G,m) = P(G,L)$.  Let $L'$ be the $m$-assignment for $G'$ given by $L'(v)=L(v)$ for each $v \in V(G)$ and $L'(z)=L(x)$.  It is clear that each proper $L$-coloring of $G$ can be extended to a proper $L'$-coloring of $G'$ in $(m-1)$ ways.  Consequently,
\[P_{\ell}(G',m) \leq P(G',L') = (m-1)P(G,L) = (m-1)P_{\ell}(G,m).\]
Now, suppose that $K'$ is an $m$-assignment for $G'$ such that $P_{\ell}(G',m) = P(G',K')$.  Let $K$ be the $m$-assignment for $G$ obtained by restricting the domain of $K'$ to $V(G)$.  Now, each proper $K$-coloring of $G$ can be extended to a proper $K'$-coloring of $G'$ in $m$ or $(m-1)$ ways (depending on whether the color used on $x$ is in $K'(z)$).  Thus,
\[P_{\ell}(G',m) \geq P(G',K') \geq (m-1)P(G,K) = (m-1)P_{\ell}(G,m).\]
The result follows.
\end{proof}

\thmcharacterize*
\begin{proof}
    Suppose the core of $G$ is a copy of $K_1$, $C_{2k+2}$ for $k\in \N$, or $K_{2,3}$.  
    We will proceed by induction on the number of vertices of $G$, say $D$, deleted to obtain the core.
    If $G$ required zero deletions, then \cref{thm: examples} implies that $G$ is enumeratively chromatic-choosable. 
    Now, assume that $D\ge 1$ and any graph with core $K_1$, $C_{2k+2}$ for $k \in \N$, or $K_{2,3}$ resulting from less than $D$ vertex deletions is enumeratively chromatic-choosable.
    Let $z\in V(G)$ satisfy $\deg_G(z)=1$, and let $G'=G-z$. 
    By the induction hypothesis, we have that $P_{\ell}(G',m)=P(G',m)$ for $m\ge 2$.  Notice 
    \begin{equation*}
        \begin{split}
            P_{\ell}(G,m)&=(m-1)P_{\ell}(G',m) \text{ by \cref{lem: pendant}}\\
            &=(m-1)P(G',m) \text{ by the induction hypothesis}\\
            &=P(G,m)
        \end{split}
    \end{equation*}
Thus, $G$ is enumeratively chromatic-choosable.

    Conversely, suppose that $G$ is enumeratively chromatic-choosable.
    Since $G$ is enumeratively chromatic-choosable, it is chromatic-choosable.
    Since $\chi(G)=2$, it follows that $G$ is bipartite and 2-choosable.
    \cref{thm: 2choosable} implies that the core of $G$ belongs to $K_1$, $C_{2k+2}$ for $k\in \N$, or $\Theta(2,2,2k)$ for $k\ge 1$.
    If the core of $G$ is one of the following $K_1$, $C_{2k+2}$  for $k \in \N$, or $\Theta(2,2,2)=K_{2,3}$, then the conclusion follows. 
    Now, suppose that the core, $C$, of $G$ is $\Theta(2,2,2k)$ for $k\ge 2$.  We will derive a contradiction in this case. Let the vertices of the core be $w_1$, $w_2$, \dots, $w_{2k+3}$ where the paths of the core are $S_1=w_1,w_2,w_3$, $S_2=w_1,w_4,w_3$, and $S_3=w_1,w_5,w_6,\dots ,w_{2k+3},w_3$. 

    If $C=G$, then we have a contradiction by \cref{cor: Theta222tcc}.  So, we may assume that $G \neq C$.  Let $\{v_1, \ldots, v_D\} = V(G) - V(C)$.  Additionally, suppose the degree of $v_1$ is 1 in $G$, and if $D \geq 2$, the degree of $v_{i+1}$ in $G - \{v_1, \ldots, v_i\}$ is 1 for each $i \in [D-1]$. For each $i \in [D]$ let $G_i = G - \{v_1,\dots, v_{i}\}$.  Also, let $G_0 = G$.
    Let $L$ be the $2$-assignment for $G$ defined as follows.  First, let
    $L(w_1)=\{1,3\}$, $L(w_2)=\{1,2\}$, $L(w_3)=\{1,2\}$, $L(w_4)=\{2,3\}$, $L(w_5)=\{1,3\}$, $L(w_i)=\{2,3\}$ for $6\le i \le 2k+2$, and $L(w_{2k+3})=\{1,2\}$, as in \cref{cor: Theta222tcc}.
    We will give the definition for $L(v_1),\ldots, L(v_D)$ in reverse order.
    Note that in $G_{i-1}$, the vertex $v_i$ is a leaf for $i \in [D]$.
    For $i=D,\dots,1$, let $L(v_i)$ be the same as the list of its neighbor in $G_{i-1}$.  This completes the definition of $L$.
    
    Suppose $f$ and $g$ are proper $L$-colorings of $G$.  \cref{cor: Theta222tcc} implies if $f$ and $g$ disagree, it is at one of the vertices in $\{v_1,\dots,v_D \}$.  Assume $j$ is the largest index such that $f(v_j)\ne g(v_j)$.  The vertex $v_j$ has a unique neighbor, say $z$ in $G_{j-1}$.  It follows that either $z \in V(C)$ or $z=v_k$ for some $k \in \{j+1, \ldots, D \}$. Consequently, our assumption implies that $f(z)=g(z)$.  So $ g(v_j), f(v_j)\in L(v_j)-\{f(z)\}$.  Note that $L(v_j)-\{f(z)\}$ only has one element.  Thus $f(v_j)=g(v_j)$.  This is a contradiction.  Thus there is only one proper $L$-coloring of $G$ and $P_{\ell}(G,2)=1\le P(G,2)$ which contradicts the fact that $G$ is enumeratively chromatic-choosable. 
    Thus, the core of $G$ is one of the following: $K_1$, $C_{2k+2}$ for $k\in\N$, and $K_{2,3}$, as required.
\end{proof}

Interestingly, the enumeratively chromatic-choosable graphs with chromatic number 2 are the same as the chromatic-paintable graphs with chromatic number 2.

\begin{thm} [\cite{Z09}] \label{thm: 2paintability}
A connected graph $G$ is 2-paintable if and only if the core of $G$ is a copy of: $K_1$, $C_{2k+2}$ for $k\in \N$, or $K_{2,3}$.
\end{thm}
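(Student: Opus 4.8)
The plan is to run the argument in parallel with the Erd\H{o}s--Rubin--Taylor characterization of $2$-choosable graphs (\cref{thm: 2choosable}), replacing list colorings by the on-line ``painting game''. Recall that in the $2$-painting game on $G$ every vertex starts with two tokens; each round Lister marks a nonempty set $M$ of uncolored vertices and deletes one token from each vertex of $M$, and then Painter colors a (possibly empty) independent subset of $M$ with the current color; Painter wins if every vertex is eventually colored, while Lister wins as soon as some uncolored vertex has no tokens. I would first record three basic facts. (i) $2$-paintability implies $2$-choosability. (ii) \emph{Painter wins on $G$ if and only if Painter has a strategy under which, at the start of every round, the set of uncolored $1$-token vertices is independent}: if this set is always independent then whenever Lister marks a $1$-token vertex Painter can color all marked $1$-token vertices in a single round, so Lister never wins, and the game is finite; conversely, as soon as two adjacent vertices each have a single token Lister marks that pair and Painter loses one of them. (iii) $2$-paintability is closed under taking subgraphs, and attaching a pendant vertex $z$ to a $2$-paintable graph $G'$ keeps it $2$-paintable: Painter runs the $G'$-strategy on $V(G')$ and colors $z$ the first time $z$ is marked in a round where its unique neighbor is not being colored, so $z$ loses at most one token before it is colored or becomes isolated, and $z$ never lies in an adjacent pair of $1$-token vertices. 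By (iii), $2$-paintability of a connected graph depends only on its core, and by (i) together with \cref{thm: 2choosable} the core is $K_1$, $C_{2k+2}$, or $\Theta(2,2,2k)$; so it remains to decide which of these three are $2$-paintable. (Note that one cannot simply invoke \cref{thm: characterize}: enumerative chromatic-choosability and $2$-paintability are a priori unrelated notions.)

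For the sufficiency direction I would show that $K_1$ (immediate), every even cycle, and $K_{2,3}$ are $2$-paintable; combined with the pendant operation in (iii) this gives $2$-paintability of every connected graph whose core is $K_1$, $C_{2k+2}$, or $K_{2,3}$. In each case Painter plays to maintain the invariant of (ii). When Lister marks $M$, the graph $G[M]$ is bipartite, and Painter colors one side of each component of $G[M]$ (adding its $G[M]$-isolated vertices), chosen so that every marked $1$-token vertex lies in the colored side and the newly created $1$-token vertices are independent from the surviving old ones. For $K_{2,3}$ this works because each of the two parts is independent, so an arbitrary set of same-part $1$-token vertices can always be colored at once; for even cycles Painter can always reply so that the $1$-token vertices stay within one side of the bipartition, breaking the cycle into a path at the first threatening mark. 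Showing that these replies are always available is routine but needs some care.

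For the necessity direction it remains to prove that $\Theta(2,2,2k)$ is not $2$-paintable when $k\ge 2$; by the subgraph-monotonicity in (iii) this also rules out every graph whose core is $\Theta(2,2,2k)$, which is exactly what is needed. Here I would give a winning Lister strategy whose goal, by (ii), is to create two adjacent $1$-token vertices. The idea is to emulate, mark by mark, the extremal $2$-assignment behind \cref{cor: Theta222tcc} (see \cref{fig:t224} and \cref{fig:k222t}): Lister reveals the ``list restrictions'' one token at a time along the long even $u$--$v$ path and the two length-$2$ $u$--$v$ paths so that any independent set Painter commits to in an early round can be punished later, using that that $2$-assignment has an essentially unique proper coloring. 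Steering Painter in this way, Lister forces a position in which a vertex on a short path and an adjacent hub, or two consecutive vertices of the long path, are simultaneously left with one token; Lister then marks that pair and wins.

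The step I expect to be hardest is this Lister strategy for $\Theta(2,2,2k)$, $k\ge 2$. Unlike in the list-coloring setting, these theta graphs are still $2$-choosable, so no static list assignment can defeat Painter and the strategy must genuinely exploit that Painter commits before seeing the next mark; moreover $\Theta(2,2,4)$ is not a subgraph of $\Theta(2,2,2k)$ for $k>2$, so the strategy does not follow from the smallest case and must be designed uniformly in $k$ while Lister carefully avoids ever offering Painter a vertex whose coloring would break the theta into $2$-paintable pieces. The Painter strategies for even cycles and $K_{2,3}$ are a lesser but real difficulty, since maintaining the independence of the $1$-token set forces Painter to pick the correct side of $G[M]$ each round and, for cycles, to break the cycle at the right moment.
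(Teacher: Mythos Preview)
The paper does not prove this theorem: it is quoted from \cite{Z09} as a known result, stated immediately after \cref{thm: characterize} solely to remark that the two characterizations coincide, and no argument or sketch is given. There is therefore no ``paper's own proof'' to compare your proposal against.

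For what it is worth, your outline is the standard one and is essentially how Zhu proceeds: reduce to the core via a pendant-attachment lemma, use that $2$-paintable implies $2$-choosable together with \cref{thm: 2choosable} to restrict the possible cores to $K_1$, $C_{2k+2}$, and $\Theta(2,2,2k)$, exhibit Painter strategies for the first two families and for $K_{2,3}=\Theta(2,2,2)$, and exhibit a winning Lister strategy on $\Theta(2,2,2k)$ for $k\ge 2$. You have correctly located the only genuine work in that last step and correctly noted that adaptivity is essential there, since these theta graphs are $2$-choosable and hence no static $2$-assignment defeats Painter.
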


\section{2-Choosable Theta Graphs} \label{theta}

In this section we prove \cref{thm: 3andabove}; throughout, unless otherwise noted, suppose that $G = \Theta(2,2,2k)$ where $k \geq 2$.  Suppose that the endpoints of the paths used to construct $G$ are $u$ and $v$.  Suppose the internal vertices of the paths of length 2 used to construct $G$ are $x_1$ and $y_1$.  We call these paths $S_1$ and $S_2$ respectively.  Suppose the internal vertices of the path of length $2k$ used to construct $G$ are $z_1, \ldots, z_{2k-1}$.  We call this path $S_3$.  When $L$ is an $m$-assignment for $G$, for each $i \in [3]$, let $L_i$ be the $m$-assignment for $S_i$ obtained by restricting the domain of $L$ to $V(S_i)$.  For each $i \in [3]$, $c \in L(u)$, and $d \in L(v)$, let $N_i(c,d)$ be the number of proper $L_i$-colorings of $S_i$ where $u$ is colored with $c$ and $v$ is colored with $d$.  We begin with an important lemma.

\begin{lem} \label{lem: sum}
Suppose that $L$ is an $m$-assignment for $G$.  Then,
\[P(G,L) = \sum_{(c,d) \in L(u) \times L(v)} \prod_{i=1}^3 N_i(c,d).\]
\end{lem}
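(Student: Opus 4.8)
The plan is to count proper $L$-colorings of $G = \Theta(2,2,2k)$ by conditioning on the colors assigned to the two vertices $u$ and $v$ that are shared by all three paths. First I would observe that a coloring $f$ of $V(G)$ is determined by the triple consisting of its restrictions $f|_{V(S_1)}$, $f|_{V(S_2)}$, $f|_{V(S_3)}$, since $V(G) = V(S_1) \cup V(S_2) \cup V(S_3)$; moreover these restrictions are constrained only by the requirement that they agree on the common vertices $u$ and $v$ (the paths $S_1, S_2, S_3$ share no vertices other than $u$ and $v$, and $G$ has no edges other than those lying on the three paths). Hence $f$ is a proper $L$-coloring of $G$ if and only if there is a pair $(c,d) \in L(u) \times L(v)$ such that, for each $i \in [3]$, $f|_{V(S_i)}$ is a proper $L_i$-coloring of $S_i$ with $u$ colored $c$ and $v$ colored $d$.

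Next I would make this bijective. Fix $(c,d) \in L(u) \times L(v)$ and let $A_i(c,d)$ be the set of proper $L_i$-colorings of $S_i$ sending $u \mapsto c$ and $v \mapsto d$, so that $|A_i(c,d)| = N_i(c,d)$. The map sending a proper $L$-coloring $f$ of $G$ with $f(u)=c$, $f(v)=d$ to the triple $(f|_{V(S_1)}, f|_{V(S_2)}, f|_{V(S_3)}) \in A_1(c,d) \times A_2(c,d) \times A_3(c,d)$ is a bijection: it is well-defined by the previous paragraph, it is injective because the $S_i$ cover $V(G)$, and it is surjective because any triple of colorings agreeing (by construction) on $u$ and $v$ glues to a well-defined map on $V(G)$ that is proper on each edge of $G$. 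Therefore the number of proper $L$-colorings of $G$ with $f(u)=c$ and $f(v)=d$ equals $\prod_{i=1}^{3} N_i(c,d)$. Summing over all $(c,d) \in L(u) \times L(v)$, and noting these events partition the proper $L$-colorings of $G$ since $f(u) \in L(u)$ and $f(v) \in L(v)$ for every such $f$, yields
\[
P(G,L) = \sum_{(c,d) \in L(u)\times L(v)} \prod_{i=1}^3 N_i(c,d),
\]
as claimed.

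There is essentially no hard step here; the only thing that needs genuine care is the gluing/covering argument — verifying that the decomposition of $V(G)$ into the three paths, which overlap exactly in $\{u,v\}$, means a family of compatible colorings of the $S_i$ assembles into a unique coloring of $G$, and that every edge of $G$ lies in exactly one $S_i$ so that propriety on $G$ is equivalent to propriety on each $S_i$. This is immediate from the definition of the generalized theta graph $\Theta(2,2,2k)$ as a pair of end vertices joined by three internally disjoint paths, but I would state it explicitly since it is the structural fact the whole identity rests on.
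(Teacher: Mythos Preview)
Your proof is correct and follows essentially the same approach as the paper: both establish a bijection between proper $L$-colorings of $G$ and, for each $(c,d)\in L(u)\times L(v)$, triples in $A_1(c,d)\times A_2(c,d)\times A_3(c,d)$ (the paper writes these sets as $C^{(i)}_{(c,d)}$), then sum over $(c,d)$. The only cosmetic difference is that the paper writes the bijection in the direction from triples to colorings, while you write it from colorings to triples.
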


\begin{proof}
    For each $i\in[3]$, let $C^{(i)}_{(c,d)}$ be the set of proper $L_i$-colorings of $S_i$ where $u$ is colored with $c$ and $v$ is colored with $d$.    
    
    Let $M: \bigcup\limits_{(c,d)\in L(u)\times L(v)} \left(C^{(1)}_{(c,d)}\times C^{(2)}_{(c,d)}\times C^{(3)}_{(c,d)}\right)\rightarrow \{f: \text{$f$ is a proper }L\text{-coloring of }G\}$ be the function defined by $M\left((g_1,g_2,g_3)\right)=T$ where $T:V(G)\rightarrow \bigcup_{w \in V(G)} L(w)$ is given by $T(u)=g_1(u)$, $T(v)=g_1(v)$, $T(x_1)=g_1(x_1)$, $T(y_1)=g_2(y_1)$, and $T(z_i)=g_3(z_i)$ for $i \in [2k-1]$.  Clearly, $T$ is a proper $L$-coloring of $G$.  It is easy to verify that $M$ is a bijection.

    Since we are considering a Cartesian product, it follows that $\left|C_{(c,d)}^{(1)}\times C_{(c,d)}^{(2)}\times C_{(c,d)}^{(3)}\right|=\prod_{i=1}^3 N_i(c,d)$.  By the addition rule, we
    have that \[\left|\bigcup\limits_{(c,d)\in L(u)\times L(v)} \left(C^{(1)}_{(c,d)}\times C^{(2)}_{(c,d)}\times C^{(3)}_{(c,d)}\right) \right|=\sum_{(c,d) \in L(u) \times L(v)} \prod_{i=1}^3 N_i(c,d).\]
    Since $M$ is a bijection, this is the number of proper $L$-colorings of $G$, as required.
\end{proof}

The AM-GM inequality immediately yields the following.
\begin{cor}\label{cor: amgmpgl}
    Suppose that $L$ is an $m$-assignment for $G$.   Then,
    \[P(G,L)=\sum\limits_{(c,d) \in L(u) \times L(v)} \prod\limits_{i=1}^3 N_i(c,d)\ge m^2\left(\prod_{(c,d) \in L(u) \times L(v)} \prod_{i=1}^3 N_i(c,d)\right)^{1/m^2}.\]
\end{cor}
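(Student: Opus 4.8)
The plan is to derive both parts of the displayed chain directly: the equality is exactly the content of \cref{lem: sum}, and the inequality is a one-line application of the AM-GM inequality to the $m^2$ nonnegative numbers indexed by $L(u) \times L(v)$.

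First I would invoke \cref{lem: sum} to replace $P(G,L)$ by $\sum_{(c,d) \in L(u) \times L(v)} \prod_{i=1}^3 N_i(c,d)$, so that only the inequality remains to be proved. Since $L$ is an $m$-assignment, $|L(u)| = |L(v)| = m$ and hence $|L(u) \times L(v)| = m^2$; thus the sum has exactly $m^2$ summands, each of which is a product of nonnegative integers and therefore a nonnegative real. Applying the AM-GM inequality to these $m^2$ reals gives
\[
\frac{1}{m^2} \sum_{(c,d) \in L(u) \times L(v)} \prod_{i=1}^3 N_i(c,d) \;\geq\; \left( \prod_{(c,d) \in L(u) \times L(v)} \prod_{i=1}^3 N_i(c,d) \right)^{1/m^2},
\]
and multiplying both sides by $m^2$ yields precisely the stated bound.

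The only point requiring a word of care is the degenerate case: if some factor $N_i(c,d)$ vanishes, then the right-hand geometric-mean term is $0$ and the inequality is immediate because the left-hand side is a sum of nonnegative terms; otherwise every summand is positive and the classical AM-GM inequality applies verbatim. So there is essentially no obstacle in this corollary — all of the substance sits in \cref{lem: sum}, whose bijective argument decomposes each proper $L$-coloring of $G$ into a triple of proper colorings of the three constituent paths $S_1, S_2, S_3$ sharing the endpoints $u$ and $v$; AM-GM then merely converts the resulting sum into the product form that will presumably serve as the starting point for lower-bounding $P_\ell(G,m)$ in the proof of \cref{thm: 3andabove}.
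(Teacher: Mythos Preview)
Your proof is correct and matches the paper's approach exactly: the paper simply states that the AM-GM inequality immediately yields the corollary from \cref{lem: sum}, which is precisely what you spell out. Your extra remark about the degenerate case where some $N_i(c,d)=0$ is a harmless clarification that the paper omits.
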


We now prove a series of lemmas that will culminate in the proof of \cref{thm: 3andabove}.  Our general strategy is to show that for any $m$-assignment $L$ for $G$ with $m \geq 3$, $P(G,L) \geq P(G,3)$.  We prove this in the case that $L(u)= L(v)$, the case that $m=3$ and $L(u) \neq L(v)$, and the case that $m > 3$ and $L(u) \neq L(v)$.  Importantly, the formula for the chromatic polynomial of theta graphs is well known.  In particular, we have that for each $m \in \N$, 
\[P(G,m) = (m-2)^2 ((m-1)^{2k+1}-(m-1))+ (m-1)^2((m-1)^{2k} + (m-1)) \]
(see formula (2.6a) in~\cite{Bh01}).

\begin{lem} \label{lem: samelist}
Suppose that $L$ is an $m$-assignment for $G$ such that $m \geq 3$ and $L(u)=L(v)$.  Then, $P(G,L) \geq P(G,m)$.
\end{lem}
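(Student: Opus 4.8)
The plan is to begin from \cref{lem: sum}, which writes $P(G,L)=\sum_{(c,d)\in L(u)\times L(v)} N_1(c,d)N_2(c,d)N_3(c,d)$, and to split this sum using the hypothesis $L(u)=L(v)=:A$ (so $|A|=m$) into the diagonal part (pairs with $c=d$) and the off-diagonal part (pairs with $c\ne d$). First I would record the easy estimates on the two ``cherry'' factors: since $S_1=u,x_1,v$ and $S_2=u,y_1,v$ have length $2$, we have $N_1(c,d)=|L(x_1)\setminus\{c,d\}|$ and $N_2(c,d)=|L(y_1)\setminus\{c,d\}|$, so that $N_1(c,d),N_2(c,d)\ge m-1$ whenever $c=d$ and $N_1(c,d),N_2(c,d)\ge m-2$ whenever $c\ne d$.

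The crux is to recognise the two partial sums of the factor $N_3$ as chromatic-polynomial counts of cycles. The quantity $\sum_{c\in A}N_3(c,c)$ counts the proper $L_3$-colorings of the path $S_3=u,z_1,\dots,z_{2k-1},v$ in which $u$ and $v$ receive the same color; identifying $u$ with $v$ turns $S_3$ into a cycle on $2k$ vertices, and---precisely because $L(u)=L(v)$---the identified vertex inherits a list of size exactly $m$, so these colorings are in bijection with the proper colorings of $C_{2k}$ under an $m$-assignment. Likewise, $\sum_{(c,d)\in A^2,\ c\ne d}N_3(c,d)$ counts the proper $L_3$-colorings of $S_3$ in which $u$ and $v$ get different colors, and adding the edge $uv$ (which makes $S_3$ into a genuine cycle $C_{2k+1}$ since $k\ge 2$) shows these are in bijection with the proper colorings of $C_{2k+1}$ under an $m$-assignment. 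Since cycles are enumeratively chromatic-choosable by \cref{thm: examples}(ii) and $m\ge 3=\chi(C_{2k+1})$, each of these counts is at least the corresponding chromatic polynomial, giving $\sum_{c\in A}N_3(c,c)\ge (m-1)^{2k}+(m-1)$ and $\sum_{(c,d)\in A^2,\ c\ne d}N_3(c,d)\ge (m-1)^{2k+1}-(m-1)$.

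Putting this together, and using that every factor discarded along the way is nonnegative, I would obtain
\[P(G,L)\ \ge\ (m-1)^2\sum_{c\in A}N_3(c,c)\ +\ (m-2)^2\sum_{(c,d)\in A^2,\ c\ne d}N_3(c,d),\]
and by the two cycle bounds the right-hand side is at least $(m-1)^2\big((m-1)^{2k}+(m-1)\big)+(m-2)^2\big((m-1)^{2k+1}-(m-1)\big)$, which is exactly the known formula for $P(G,m)$ stated above; this completes the proof.

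I do not anticipate a real obstacle here: the only points that need care are the bookkeeping in the two bijections---in particular checking that the identified vertex in the $C_{2k}$ reduction carries a list of size $m$ (this is the single place where the assumption $L(u)=L(v)$ is used) and that $2k\ge 4$ and $2k+1\ge 5$ so that $C_{2k}$ and $C_{2k+1}$ are honest simple cycles, which holds because $k\ge 2$. Note that this case requires neither \cref{cor: amgmpgl} nor DP-coloring; the AM--GM estimate is presumably reserved for the cases $L(u)\ne L(v)$. If the cycle reduction were to prove awkward to phrase cleanly, an alternative would be to establish directly, by induction on $k$, the two-part bound $N_3(c,c)\ge\frac{(m-1)^{2k}+(m-1)}{m}$ and, for $c\ne d$, $N_3(c,d)\ge\frac{(m-1)^{2k}-1}{m}$, and then argue term by term; but the cycle argument is shorter and reuses \cref{thm: examples}(ii) directly.
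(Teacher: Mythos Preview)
Your proposal is correct and follows essentially the same approach as the paper: split the sum from \cref{lem: sum} into diagonal and off-diagonal parts, bound $N_1,N_2$ trivially, and reduce the two $N_3$-sums to list-coloring counts of $C_{2k}$ (via identification/contraction of $u$ and $v$, using $L(u)=L(v)$) and $C_{2k+1}$ (via adding the edge $uv$), then invoke \cref{thm: examples}(ii). The paper phrases the $C_{2k}$ step as contracting an added edge rather than identifying vertices, but the argument is the same.
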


\begin{proof}
Without loss of generality assume that $L(u) = L(v) = [m]$. 
 Suppose $(c,d) \in [m]^2$. For $i \in [2]$, we have that $N_i(c,d) \geq m-2$ when $c \neq d$, and we have that $N_i(c,d) \geq m-1$ when $c=d$.

Now, suppose that $e$ is an edge with endpoints $u$ and $v$.  Let $G'$ be the graph obtained from $S_3$ by adding $e$.  Notice that a proper $L_3$-coloring of $S_3$ that colors $u$ and $v$ differently is also a proper $L_3$-coloring of $G'$.  This observation along with \cref{thm: examples} yields
\[ \sum_{(c,d) \in L(u) \times L(v), c \neq d} N_3(c,d) = P(G',L_3) \geq P(C_{2k+1},m) =  (m-1)^{2k+1} - (m-1).\]
Let $G''$ be the graph obtained from $G'$ by contracting $e$, and let $w$ be the vertex of $G''$ obtained via the contraction.  Let $L''$ be the $m$-assignment for $G''$ given by $L''(w) = [m]$ and $L''(x) = L_3(x)$ for each $x \in V(G'') - \{w\}$.  There is clearly a one-to-one correspondence between the proper $L_3$-colorings of $S_3$ that color $u$ and $v$ the same color and the proper $L''$-colorings of $G''$.   This observation along with \cref{thm: examples} yields
\[ \sum_{j=1}^m N_3(j,j) = P(G'', L'') \geq P(C_{2k},m) = (m-1)^{2k} + (m-1). \]
So, by \cref{lem: sum},
\begin{align*}
    P(G,L) &= \sum_{(c,d) \in L(u) \times L(v), c \neq d} \prod_{i=1}^3 N_i(c,d) + \sum_{j=1}^m \prod_{i=1}^3 N_i(j,j) \\
    & \geq \sum_{(c,d) \in L(u) \times L(v), c \neq d} (m-2)^2 N_3(c,d) + \sum_{j=1}^m (m-1)^2 N_3(j,j) \\
    & \geq P(G,m).
\end{align*}
\end{proof}

We now turn our attention to $m$-assignments $L$ for $G$ satisfying $L(u) \neq L(v)$.

\begin{lem} \label{lem: max}
Suppose $P$ is the path of length 2 with $P = u,w,v$ and $L$ is an $m$-assignment for $P$ where $m \geq 3$.  For each $c \in L(u)$ and $d \in L(v)$, let $N(c,d)$ be the number of proper $L$-colorings of $P$ where $u$ is colored with $c$ and $v$ is colored with $d$.  Suppose $a = |L(u) \cap L(v)|$.  Then, the following statements hold.
\begin{enumerate}[label=(\roman*)]
\item   For each $c \in L(u)$ and $d \in L(v)$, $m-2 \leq N(c,d) \leq m$.
\item   The number of ordered pairs $(c,d) \in L(u) \times L(v)$ for which $N(c,d) = m-2$ is at most $\frac{(a+m)^2}{4} - a$.
\end{enumerate}
\end{lem}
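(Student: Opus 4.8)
The plan is to reduce both claims to counting the colors available at the middle vertex $w$ of $P = u,w,v$. First I would note that a proper $L$-coloring of $P$ with $u \mapsto c$ and $v \mapsto d$ amounts to a choice of color for $w$ in $L(w) \setminus \{c,d\}$, so $N(c,d) = |L(w) \setminus \{c,d\}|$; since $|L(w)| = m$ and at most two colors are deleted, this gives $m-2 \le N(c,d) \le m$ and settles (i).

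For (ii), I would start from the observation that $N(c,d) = m - 2$ exactly when $\{c,d\}$ is a $2$-subset of $L(w)$, i.e.\ when $c \ne d$, $c \in L(u) \cap L(w)$, and $d \in L(v) \cap L(w)$. Setting $b = |L(u) \cap L(w)|$, $b' = |L(v) \cap L(w)|$, and $t = |L(u) \cap L(v) \cap L(w)|$, the number of such ordered pairs is $bb' - t$ (there are $bb'$ pairs with $c \in L(u) \cap L(w)$, $d \in L(v) \cap L(w)$, of which $t$ have $c = d$). So the goal becomes to show $bb' - t \le \tfrac{(a+m)^2}{4} - a$.

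The main point is to extract a linear constraint relating $b$, $b'$, $t$, and $m$: the sets $(L(u)\cap L(w))\setminus L(v)$, $(L(v)\cap L(w))\setminus L(u)$, and $L(u)\cap L(v)\cap L(w)$ are pairwise disjoint subsets of $L(w)$ with sizes $b-t$, $b'-t$, $t$, so $(b-t)+(b'-t)+t \le m$, i.e.\ $b+b' \le m+t$; and trivially $t \le |L(u)\cap L(v)| = a$. Then AM-GM gives $bb' \le \big(\tfrac{b+b'}{2}\big)^2 \le \big(\tfrac{m+t}{2}\big)^2$, hence $bb' - t \le \tfrac{(m+t)^2}{4} - t$. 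Finally I would observe that $h(t) := \tfrac{(m+t)^2}{4} - t$ has derivative $\tfrac{m+t-2}{2} > 0$ for $t \ge 0$ (using $m \ge 3$), so $h$ is increasing on $[0,a]$ and $bb' - t \le h(t) \le h(a) = \tfrac{(a+m)^2}{4} - a$, as required.

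I do not expect a genuine obstacle here — the argument is a short optimization. The only step needing a little care is identifying the correct constraint $b + b' \le m + t$ from the disjointness of those three subsets of $L(w)$, and then noticing that, because of it, the extremal configuration has $t$ as large as possible (namely $t = a$, realized when $b = b' = \tfrac{m+a}{2}$), which is exactly where the stated bound comes from.
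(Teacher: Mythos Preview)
Your proof is correct and follows essentially the same route as the paper: both identify that $N(c,d)=m-2$ iff $c\in L(u)\cap L(w)$, $d\in L(v)\cap L(w)$, $c\neq d$, express the count as a product minus the triple intersection, impose the constraint that the three relevant pieces fit inside $L(w)$, apply AM--GM, and then use monotonicity of $x\mapsto \tfrac{(m+x)^2}{4}-x$ (for $m\ge 3$) to pass from the triple-intersection size to $a$. Your notation $(b,b',t)$ corresponds to the paper's $(k_1+k_2,\,k_1+k_3,\,k_1)$, and you make the monotonicity step explicit where the paper leaves it implicit.
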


\begin{proof}
Since $w$ has 2 neighbors, (i) is obvious.

Now, suppose $|L(w) \cap L(u) \cap L(v)| = k_1$, $|L(w) \cap \left(L(u) - L(v)\right)| = k_2$, and $|L(w) \cap \left(L(v) - L(u)\right)| = k_3$.  Note that in order for $N(c,d) = m-2$: $c \in L(u) \cap L(w)$, $d \in L(v) \cap L(w)$, and $c \neq d$.  Consequently, the number of ordered pairs $(c,d) \in L(u) \times L(v)$ for which $N(c,d) = m-2$ is $(k_1+k_3)(k_1+k_2) - k_1.$  Since $k_2+k_3 \leq m-k_1$ and $k_1 \leq a$, we have that \[(k_1+k_3)(k_1+k_2) - k_1 \leq \left(k_1 + \frac{m-k_1}{2}\right)^2 - k_1 = \left(\frac{m+k_1}{2}\right)^2 - k_1 \leq \frac{(a+m)^2}{4} - a.\] \end{proof}

Our next lemma requires the DP color function which we now introduce.  As it is introduced, graph $G$ is not necessarily a copy of $\Theta(2,2,2k)$ for some $k \geq 2$ but rather an arbitrary graph.  The concept of DP-coloring was first put forward in 2015 by Dvo\v{r}\'{a}k and Postle under the name \emph{correspondence coloring} (see~\cite{DP15}).  Intuitively, DP-coloring generalizes list coloring by allowing the colors that are identified as the same to vary from edge to edge.  Formally, for a graph $G$, a \emph{DP-cover} (or simply a \emph{cover}) of $G$ is an ordered pair $\mathcal{H}=(L,H)$, where $H$ is a graph and $L:V(G)\to 2^{V(H)}$ is a function satisfying the following conditions: 
    \begin{itemize}
		\item $\{L(v) : v \in V(G)\}$ is a partition of $V(H)$ into $|V(G)|$ parts, 

		\item for every pair of adjacent vertices $u$, $v\in V(G)$, the edges in $E_H\left(L(u),L(v)\right)$ form a matching (not necessarily perfect and possibly empty), and

		\item $\displaystyle E(H) = \bigcup_{uv \in E(G)} E_{H}(L(u),L(v)).$
    \end{itemize}
    
Suppose $\mathcal{H}=(L,H)$ is a cover of a graph $G$.  A \emph{transversal} of $\mathcal{H}$ is a set of vertices $T\subseteq V(H)$ containing exactly one vertex from each $L(v)$. A transversal $T$ is said to be \emph{independent} if $T$ is an independent set in $H$.  If $\mathcal{H}$ has an independent transversal $T$, then $T$ is said to be a \emph{proper $\mathcal{H}$-coloring} of $G$, and $G$ is said to be \emph{$\mathcal{H}$-colorable}.  A \emph{$k$-fold cover} of $G$ is a cover $\mathcal{H}=(L,H)$ such that $|L(v)|=k$ for all $v\in V(G)$.

Suppose $\mathcal{H} = (L,H)$ is a cover of graph $G$.  We let $P_{DP}(G, \mathcal{H})$ be the number of proper $\mathcal{H}$-colorings of $G$.  Then, the \emph{DP color function}, denoted $P_{DP}(G,m)$, is the minimum value of $P_{DP}(G, \mathcal{H})$ where the minimum is taken over all possible $m$-fold covers $\mathcal{H}$ of $G$.  

Now, suppose that $L$ is an $m$-assignment for $G$.  The \emph{cover of $G$ corresponding to $L$}, denoted $\mathcal{H}_L = (\Lambda_L,H_L)$, is the cover of $G$ defined as follows.  For each $v \in V(G)$, $\Lambda_L(v) = \{(v,c) : c \in L(v) \}$, and $H_L$ is the graph with vertex set $\bigcup_{v \in V(G)} \Lambda_L(v)$ and edges created so that for any $(u,c_1),(v,c_2) \in V(H_L)$, $(u,c_1)(v,c_2) \in E(H)$ if and only if $uv \in E(G)$ and $c_1 = c_2$.  Notice that if $\mathcal{C}$ is the set of proper $L$-colorings of $G$ and $\mathcal{T}$ is the set of proper $\mathcal{H}$-colorings of $G$, then the function $h: \mathcal{C} \rightarrow \mathcal{T}$ given by $h(f) = \{(v,f(v)) : v \in V(G) \}$ is a bijection.  So, for any graph $G$ and $m \in \N$,
\[P_{DP}(G, m) \leq P_\ell(G,m) \leq P(G,m).\]

\begin{lem}\label{lem: even path}
Let $P=x_1,x_2,\dots, x_{2k+1}$ where $k \in \N$ and $L$ be an $m$-assignment of $P$ with $m \geq 3$.  Suppose $c \in L(x_1)$ and $d \in L(x_{2k+1})$, and let $N(c,d)$ be the number of proper $L$-colorings of $P$ where $x_1$ is colored with $c$ and $x_{2k+1}$ is colored with $d$. For each $(c,d)\in L(x_1)\times L(x_{2k+1})$, \[\frac{(m-1)^{2k+1}-(m-1)}{m(m-1)}\le N(c,d).\]
Furthermore, there exist at least $m$ elements $(y,z) \in L(x_1)\times L(x_{2k+1})$ such that 
$$\frac{(m-1)^{2k+1}-(m-1)}{m(m-1)}+1=\frac{(m-1)^{2k}+(m-1)}{m}\le N(y,z).$$ 
\end{lem}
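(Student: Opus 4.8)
The plan is to reduce the path-counting problem to cycle-counting problems via the edge-contraction / edge-addition trick already used in the proof of \cref{lem: samelist}, and then to exploit DP-coloring to handle the case $c = d$ cleanly. Here is the setup. Fix the $m$-assignment $L$ for the odd path $P = x_1, x_2, \dots, x_{2k+1}$, and write $N(c,d)$ for the number of proper $L$-colorings with $x_1 \mapsto c$ and $x_{2k+1} \mapsto d$. First I would record the two aggregate identities. Adding an edge $e$ joining $x_1$ and $x_{2k+1}$ produces a copy of $C_{2k+1}$ (with an $m$-assignment restricting $L$), and a proper $L$-coloring of $P$ that uses different colors on the two endpoints is exactly a proper coloring of that $C_{2k+1}$; hence
\[
\sum_{\substack{(c,d) \in L(x_1) \times L(x_{2k+1}) \\ c \neq d}} N(c,d) \;=\; P(C_{2k+1}, L') \;\geq\; P(C_{2k+1}, m) \;=\; (m-1)^{2k+1} - (m-1),
\]
using \cref{thm: examples}(ii). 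Similarly, contracting that edge yields a copy of $C_{2k}$ with an $m$-assignment, and proper $L$-colorings of $P$ using the same color on both endpoints correspond bijectively to proper colorings of this $C_{2k}$, so
\[
\sum_{c \in L(x_1) \cap L(x_{2k+1})} N(c,c) \;\geq\; P(C_{2k}, m) \;=\; (m-1)^{2k} + (m-1).
\]

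For the first (lower bound) assertion, the cleanest route is a direct inductive/monotonicity argument rather than the aggregate identities: I would show that for any $(c,d)$, one has $N(c,d) \ge \frac{(m-1)^{2k+1} - (m-1)}{m(m-1)}$ by induction on $k$. For $k=1$ this is \cref{lem: max}(i), which gives $N(c,d) \ge m-2 = \frac{(m-1)^3 - (m-1)}{m(m-1)}$. For the inductive step, condition on the color of $x_3$: writing $P' = x_3, \dots, x_{2k+1}$ (an odd path on $2k-1$ vertices, i.e. "$k-1$"), we get $N(c,d) = \sum_{a \in L(x_3), a \ne \text{(stuff)}} (\text{number of ways to color } x_2) \cdot N'(a,d)$, and the worst case is controlled because the number of admissible intermediate colors, summed appropriately, never drops below the recursive bound; the arithmetic is the routine verification that $\frac{(m-1)^{2k+1} - (m-1)}{m(m-1)}$ satisfies the natural recurrence $t_k = (m-2) t_{k-1} + (\text{correction})$. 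Alternatively — and this is probably shorter to write — one observes that $N(c,d)$ equals a fixed entry of the $(2k)$-th power of the transfer matrix of an $m$-vertex graph that contains $C_{2k+1}$'s coloring structure, and the minimum entry of such a power is bounded below by the average over a suitable orbit; this is exactly where DP-coloring enters, since $P_{DP}(C_{2k+1}, m) = (m-1)^{2k+1} + (-1)^{2k+1}(m-1)$ is known and the DP-cover lets us ignore which colors get identified along edges.

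For the "furthermore" part I would argue as follows. By \cref{lem: max}(i) (applied along the path, or by the transfer-matrix picture), $N(c,d)$ takes only finitely many values, and the target inequality is $N(y,z) \ge \frac{(m-1)^{2k}+(m-1)}{m}$, which is exactly one more than the universal lower bound. Consider the edge-contracted cycle $C_{2k}$ and its DP color function: since a $C_{2k}$ on an $m$-fold cover has at least $P_{DP}(C_{2k},m) = (m-1)^{2k} + (m-1)$ proper colorings, the average of $N(c,c)$ over the $a = |L(x_1) \cap L(x_{2k+1})|$ diagonal pairs is at least $\frac{(m-1)^{2k}+(m-1)}{a} \ge \frac{(m-1)^{2k}+(m-1)}{m}$; so every diagonal pair already witnesses something close, but to get exactly $m$ witnesses I instead use the non-diagonal sum: the off-diagonal total is at least $(m-1)^{2k+1}-(m-1)$ spread over at most $m(m-1) - a$ pairs, so by averaging at least one off-diagonal pair exceeds $\frac{(m-1)^{2k+1}-(m-1)}{m(m-1)}$, and a counting argument (how many pairs can sit strictly below the threshold $\frac{(m-1)^{2k}+(m-1)}{m}$ before the total sum becomes too small) forces at least $m$ pairs $(y,z)$ to satisfy the stated inequality. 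Concretely: if fewer than $m$ pairs met the bound, the remaining $> m(m-1) - m = m(m-2)$ pairs would each contribute at most $\frac{(m-1)^{2k}+(m-1)}{m} - 1 = \frac{(m-1)^{2k+1}-(m-1)}{m(m-1)}$ to the off-diagonal sum — but multiplying this ceiling by the number of such pairs and comparing with the lower bound $(m-1)^{2k+1}-(m-1)$ yields a contradiction once one also feeds in the diagonal contribution.

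The main obstacle I anticipate is making the "furthermore" counting tight: the naive averaging gives only that \emph{one} pair beats the threshold, and squeezing out exactly $m$ such pairs requires carefully combining the diagonal estimate $\sum N(c,c) \ge (m-1)^{2k}+(m-1)$ with the off-diagonal estimate and with the per-pair ceiling from \cref{lem: max}, being careful about how $a = |L(x_1)\cap L(x_{2k+1})|$ interpolates between the two regimes. Getting the bookkeeping right — in particular verifying that $\frac{(m-1)^{2k}+(m-1)}{m}$ is an integer plus the universal floor, which is what lets "$+1$" be meaningful — is the delicate point; the DP color function values for even and odd cycles (which are equal to the corresponding chromatic polynomials, a known fact) are exactly the inputs that make it work.
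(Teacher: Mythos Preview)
Your approach diverges from the paper's, and both halves of your sketch have genuine gaps.

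\textbf{What the paper does.} The paper's proof is three sentences. It completes the list cover $\mathcal{H}_L$ to a full $m$-fold DP-cover $\mathcal{H}'=(\Lambda_L,H')$ by extending each edge-matching to a perfect matching; then $H'$ is a disjoint union of exactly $m$ paths, each running from some $(x_1,c)$ to some $(x_{2k+1},d)$. Proper $\mathcal{H}'$-colorings inject into proper $L$-colorings, and the paper simply cites Lemma~8 of~\cite{BK23}, which says that the number of $\mathcal{H}'$-colorings containing $(x_1,c)$ and $(x_{2k+1},d)$ is at least $\frac{(m-1)^{2k+1}-(m-1)}{m(m-1)}$ always, and equals $\frac{(m-1)^{2k}+(m-1)}{m}$ precisely when $(x_1,c)$ and $(x_{2k+1},d)$ lie on the same path of $H'$. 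Since there are $m$ such paths, the $m$ special pairs fall out structurally. Your aggregate cycle identities and your vague remark about ``DP-coloring enters'' are gesturing in this direction, but you never actually pass to a DP-cover of the \emph{path} and you never use that $H'$ decomposes into $m$ paths, which is the whole point.

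\textbf{Why your induction stalls.} For the universal lower bound, your recursion $N_k(c,d)=\sum_{c'\in L(x_3)}|L(x_2)\setminus\{c,c'\}|\cdot N_{k-1}(c',d)$ is correct, but plugging in only $N_{k-1}\ge t_{k-1}:=\frac{(m-1)^{2k-2}-1}{m}$ gives at best $N_k\ge (m-1)^2 t_{k-1}=\frac{(m-1)^{2k}-(m-1)^2}{m}$, which is strictly less than $t_k=\frac{(m-1)^{2k}-1}{m}$. To close this $\frac{(m-1)^2-1}{m}$ deficit you would need, for each fixed $d$, at least one $c'\in L(x_3)$ with $N_{k-1}(c',d)\ge t_{k-1}+1$; but the lemma as stated guarantees only $m$ good \emph{pairs}, not one good first coordinate per second coordinate, so the inductive hypothesis is too weak.

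\textbf{Why your averaging stalls.} For the ``furthermore'' part, you correctly compute that the total $\sum N(c,d)\ge m(m-1)^{2k}=m^2 t_k+m$, so the excess over the floor is at least $m$. But you have no usable upper bound on an individual $N(c,d)$, so nothing prevents a single pair from absorbing the entire excess; pigeonhole yields one special pair, not $m$. Your proposed fix (feed in the diagonal estimate and \cref{lem: max}) does not help: \cref{lem: max} concerns length-$2$ paths only, and for general $k$ the spread of $N(c,d)$ can be large. The $m$ witnesses in the paper come from a structural fact about the cover graph, not from counting.
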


\begin{proof}
Let $H'$ be the graph obtained from $H_L$ by arbitrarily adding edges so that \\ $E_{H'}\left(\Lambda_L(x_i),\Lambda_L(x_{i+1})\right)$ is a perfect matching for each $i \in [2k]$.  Note that $H'$ is the disjoint union of $m$ paths. Then, let $\mathcal{H}' = (\Lambda_L,H')$.  Clearly, $\mathcal{H}'$ is an $m$-fold cover of $P$.  Furthermore, let $\mathcal{T}$ be the set of proper $\mathcal{H}'$-colorings of $P$, and let $\mathcal{C}$ be the set of proper $L$-colorings of $P$.  Then we can injectively map each $T \in \mathcal{T}$ to an $f_T \in \mathcal{C}$ by letting $f_T(v)$ be the second coordinate of the ordered pair in $T$ with first coordinate $v$ for each $v \in V(G)$.

Lemma~8 in~\cite{BK23} implies that $\mathcal{T}$ has at least $((m-1)^{2k+1}-(m-1))/(m(m-1))$ elements that contain $(x_1,c)$ and $(x_{2k+1},d)$.  It further says that there are $((m-1)^{2k}+(m-1))/m$ elements of $\mathcal{T}$ that contain $(x_1,c)$ and $(x_{2k+1},d)$ if and only if there is a path in $H'$ connecting $(x_1,c)$ and $(x_{2k+1},d)$.  The result immediately follows. 
\end{proof}

\begin{lem}\label{lem: AMGM3}
Suppose $G = \Theta(2,2,2k)$ with $k \geq 2$. Suppose that $L$ is a $3$-assignment for $G$ with $m= 3$ such that $L(u)\ne L(v)$.  Then 
$P(G,L)\ge P(G,3)$.
\end{lem}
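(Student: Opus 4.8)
The plan is to lower-bound $P(G,L)$ by combining the AM--GM inequality of \cref{cor: amgmpgl} with factorwise estimates on the $N_i(c,d)$ coming from \cref{lem: max} and \cref{lem: even path}, and then to compare the result with the explicit value of $P(G,3)$. Throughout, write $a = |L(u)\cap L(v)|$; since $L(u)\neq L(v)$ and $|L(u)|=|L(v)|=3$, we have $a\le 2$. Put $A = \frac{2^{2k}-1}{3}$; this is an integer (as $2^{2k}\equiv 1\pmod 3$), it equals $\frac{(m-1)^{2k+1}-(m-1)}{m(m-1)}$ when $m=3$, and $A+1 = \frac{2^{2k}+2}{3} = \frac{(m-1)^{2k}+(m-1)}{m}$.

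First I would assemble the factor bounds. Applying \cref{lem: even path} to $S_3$ (a path on $2k+1$ vertices) with $m=3$: each of the nine pairs $(c,d)\in L(u)\times L(v)$ satisfies $N_3(c,d)\ge A$, and at least three of them satisfy $N_3(c,d)\ge A+1$, so $\prod_{(c,d)}N_3(c,d)\ge A^6(A+1)^3$. Applying \cref{lem: max} to $S_1 = u,x_1,v$: at most $\lfloor\frac{(a+3)^2}{4}-a\rfloor\le 4$ of the nine pairs have $N_1(c,d)=1$, the rest having $N_1(c,d)\ge 2$, so $\prod_{(c,d)}N_1(c,d)\ge 2^5$, and likewise $\prod_{(c,d)}N_2(c,d)\ge 2^5$ via $S_2 = u,y_1,v$. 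Feeding these into \cref{cor: amgmpgl},
\[
P(G,L)\ \ge\ 9\Big(\textstyle\prod_{(c,d)}N_1(c,d)N_2(c,d)N_3(c,d)\Big)^{1/9}\ \ge\ 9\big(2^{10}A^6(A+1)^3\big)^{1/9}\ =\ 9\cdot 2^{10/9}A^{2/3}(A+1)^{1/3}.
\]

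On the other side, the chromatic polynomial formula quoted before \cref{lem: samelist} gives $P(G,3) = 2^{2k+1}-2+4(2^{2k}+2) = 6\cdot 2^{2k}+6 = 18A+12$ (using $2^{2k}=3A+1$). So the proof reduces to the numeric inequality $9\cdot 2^{10/9}A^{2/3}(A+1)^{1/3}\ge 18A+12$ for all integers $A\ge 5$ (which covers every $k\ge 2$). Dividing by $9$, writing $A^{2/3}(A+1)^{1/3}=A(1+\tfrac1A)^{1/3}$, and cubing the (positive) sides turns this into $2^{10/3}(1+\tfrac1A)\ge(2+\tfrac{4}{3A})^3$; since $2^{10}>10^3$ gives $2^{10/3}>10$, it suffices to prove $10(1+\tfrac1A)\ge(2+\tfrac{4}{3A})^3$, i.e.\ $2\ge\tfrac6A+\tfrac{32}{3A^2}+\tfrac{64}{27A^3}$, which is clear for $A\ge 5$.

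The delicate point is the last inequality, which is essentially tight: for $k=2$ one has $A=5$ and $P(G,3)=102$, while the AM--GM bound only delivers roughly $103$. Hence one cannot be wasteful — replacing ``at least three pairs have $N_3(c,d)\ge A+1$'' by the weaker ``$N_3(c,d)\ge A$ for all pairs'', or replacing ``$N_i(c,d)\ge 2$ except for at most four pairs'' by ``$N_i(c,d)\ge 1$'', already breaks the argument at $k=2$. So the crux is to use \cref{lem: max} and \cref{lem: even path} simultaneously and to keep the closing estimate tight enough to clear $k=2$ while remaining elementary.
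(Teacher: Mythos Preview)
Your proof is correct and follows essentially the same approach as the paper's: bound $\prod N_1\cdot\prod N_2$ via \cref{lem: max} (at most four pairs give value $1$, the rest at least $2$), bound $\prod N_3$ via \cref{lem: even path} (six factors at least $A$, three at least $A+1$), feed into \cref{cor: amgmpgl}, and compare against $P(G,3)$. The only cosmetic difference is in the closing numerics: the paper substitutes $x=2^{2k}-1$ and verifies $k=2$ separately before using $2^{1/9}>1$ for $k\ge 3$, whereas you substitute $A=x/3$ and handle all $k\ge 2$ uniformly via $2^{10/3}>10$.
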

\begin{proof}
    Note that $P(G,3)=4(2^{2k}+2)+2^{2k+1}-2$. Let $a=|L(u)\cap L(v)|$, and notice that $a \in \{0,1,2\}$ since $L(u) \neq L(v)$.
 \cref{lem: max} implies that we have that $N_i(c,d)=1$ for $i\in [2]$ for at most 4 pairs $(c,d)\in L(u)\times L(v)$.  
    Consequently,
    \[\prod_{(c,d)\in L(u)\times L(v)} N_i(c,d)\ge (1)^4(2)^5.\]
    \cref{lem: even path} implies that 
    \[\prod_{(c,d)\in L(u)\times L(v)}N_3(c,d)\ge \left(\frac{2^{2k}-1}{3}\right)^6\left(\frac{2^{2k}+2}{3}\right)^3.\]
    

    By \cref{cor: amgmpgl}, we have that
    \[P(G,L)\ge 9\left(1^8\cdot 2^{10} \left(\frac{2^{2k}-1}{3}\right)^6\left(\frac{2^{2k}+2}{3}\right)^3\right)^{1/9}=6\left(2(2^{2k}-1)^6(2^{2k}+2)^3\right)^{1/9}.\]
    Let $x=2^{2k}-1$.  Then $P(G,3)=4(x+3)+2x=6x+12$ and \[P(G,L)\ge6\left(2x^6 (x+3)^3\right)^{1/9}=6x\left(2\left(1+\frac{3}{x}\right)^3\right)^{1/9}.\]

    To complete the proof, we will show that $6x\left(2\left(1+\frac{3}{x}\right)^3\right)^{1/9}\ge 6x+12$.
    When $k=2$, $6(15)\left(2\left(1+\frac{3}{15}\right)^3\right)^{1/9}-6(15) > 12$, as desired.  
    For $k\ge 3$, 
    \[6x\left(2\left(1+\frac{3}{x}\right)^3\right)^{1/9}-6x>6x(2^{1/9}-1)\ge 6(63)(2^{1/9}-1)>12.\]
    Hence $P(G,L)\ge P(G,3)$.
\end{proof}

\begin{lem} \label{lem: DPconnection}
Suppose $G$ is an arbitrary graph and $L$ is an $m$-assignment for $G$.  Suppose $uv \in E(G)$, $|L(u)-L(v)| = d \geq 1$, and for any $x \in L(u)$ and $y \in L(v)$ with $x \neq y$, there are at least $C$ proper $L$-colorings of $G$ that color $u$ with $x$ and $v$ with $y$.  Then,
\[P(G,L) \geq P_{DP}(G,m) + Cd.\]
\end{lem}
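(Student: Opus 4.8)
The plan is to modify the cover $\mathcal{H}_L = (\Lambda_L, H_L)$ of $G$ corresponding to $L$ into an $m$-fold cover of $G$ whose proper colorings are precisely the proper $L$-colorings that do not realize one of $d$ prescribed color pairs on the edge $uv$, and then to compare the two covers.

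Recall from the paragraph preceding the statement that $P_{DP}(G, \mathcal{H}_L) = P(G,L)$ via the bijection $h$. In $H_L$, the edges joining $\Lambda_L(u)$ and $\Lambda_L(v)$ are exactly the pairs $(u,c)(v,c)$ with $c \in L(u) \cap L(v)$; these form a matching saturating every vertex except the $d$ vertices $\{(u,x) : x \in L(u) - L(v)\}$ on the $u$-side and, since $|L(v) - L(u)| = m - |L(u) \cap L(v)| = d$ as well, every vertex except the $d$ vertices $\{(v,y) : y \in L(v) - L(u)\}$ on the $v$-side. I would fix an arbitrary bijection between these two $d$-element sets, say $(u,x_i) \leftrightarrow (v,y_i)$ for $i \in [d]$, let $H'$ be $H_L$ together with the $d$ new edges $(u,x_i)(v,y_i)$, and set $\mathcal{H}' = (\Lambda_L, H')$. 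Then the edges between $\Lambda_L(u)$ and $\Lambda_L(v)$ in $H'$ form a perfect matching while nothing else changes, so $\mathcal{H}'$ is still an $m$-fold cover of $G$, and hence $P_{DP}(G, \mathcal{H}') \geq P_{DP}(G,m)$.

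Next I would compare independent transversals. Since $E(H_L) \subseteq E(H')$, every proper $\mathcal{H}'$-coloring is a proper $\mathcal{H}_L$-coloring, and a proper $\mathcal{H}_L$-coloring $T$ fails to be a proper $\mathcal{H}'$-coloring exactly when $T$ contains one of the new edges $(u,x_i)(v,y_i)$. As $T$ meets each of $\Lambda_L(u)$, $\Lambda_L(v)$ in a single vertex and the new edges form a matching, $T$ contains at most one new edge, so the proper $\mathcal{H}_L$-colorings that are not proper $\mathcal{H}'$-colorings are partitioned according to which new edge they contain. For fixed $i$, the proper $\mathcal{H}_L$-colorings containing $(u,x_i)(v,y_i)$ correspond under $h$ to the proper $L$-colorings of $G$ that color $u$ with $x_i$ and $v$ with $y_i$; since $x_i \in L(u) - L(v)$ we have $x_i \neq y_i$, so the hypothesis provides at least $C$ of them. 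Summing over $i \in [d]$ and using $P_{DP}(G,\mathcal{H}_L) = P(G,L)$ gives
\[ P(G,L) \geq P_{DP}(G, \mathcal{H}') + Cd \geq P_{DP}(G,m) + Cd, \]
as required.

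There is no serious obstacle here; the only point requiring care is the bookkeeping — verifying that the two unsaturated sides have the same cardinality $d$ so that the completing matching exists, and checking that a single transversal can contain at most one of the added edges so that these $Cd$ "extra" colorings are counted exactly once each rather than overcounted.
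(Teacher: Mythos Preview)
Your argument is correct and is essentially the paper's proof: build an $m$-fold cover $\mathcal{H}'$ from $\mathcal{H}_L$ by completing the matching between $\Lambda_L(u)$ and $\Lambda_L(v)$, observe that the proper $\mathcal{H}'$-colorings inject into the proper $L$-colorings while missing, for each of the $d$ added edges $(u,x_i)(v,y_i)$, the at least $C$ proper $L$-colorings with $f(u)=x_i$, $f(v)=y_i$, and conclude $P(G,L)\ge P_{DP}(G,\mathcal{H}')+Cd\ge P_{DP}(G,m)+Cd$. The only cosmetic difference is that the paper completes the matchings over \emph{every} edge of $G$ (as in the proof of \cref{lem: even path}) rather than just over $uv$; since only the $d$ new edges over $uv$ are actually used in the count, your more economical version works just as well.
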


\begin{proof}
Let $L(u)-L(v) = \{c_1, \ldots, c_d\}$.  Let $H'$ be the graph obtained from $H_L$ by arbitrarily adding edges so that $E_{H'}(\Lambda_L(w),\Lambda_L(z))$ is a perfect matching whenever $wz \in E(G)$.  Then, let $\mathcal{H}' = (\Lambda_L,H')$ so that $\mathcal{H}'$ is an $m$-fold cover of $G$.  As in the proof of \cref{lem: even path}, when $\mathcal{T}$ is the set of proper $\mathcal{H}'$-colorings of $G$ and $\mathcal{C}$ is the set of proper $L$-colorings of $G$, we can injectively map each $T \in \mathcal{T}$ to an $f_T \in \mathcal{C}$.

Now, suppose that for each $i \in [d]$, $N_{H'}((u,c_i)) \cap \Lambda_L(v) = \{(v,d_i)\}$.  It follows that no $T \in \mathcal{T}$ is mapped to an $f \in \mathcal{C}$ satisfying $f(u) = c_i$ and $f(v) = d_i$ for some $i \in [d]$.  Since for each $i \in [d]$ there are at least $C$ elements of $\mathcal{C}$ that color $u$ with $c_i$ and $v$ with $d_i$,
\[P(G,L) = |\mathcal{C}| \geq |\mathcal{T}|+Cd \geq P_{DP}(G,m) + Cd.\]
\end{proof}

\begin{lem} \label{lem: greed}
Suppose $G = \Theta(2,2,2k)$ with $k \geq 2$.  Suppose that $L$ is an $m$-assignment for $G$ with $m \geq 4$.  If there is a $wz \in E(G)$ such that there is an $x \in L(w)-L(z)$ and a $y \in L(z)-L(w)$, then there are at least $(m-1)^{2k-1}(m-2)^2$ proper $L$-colorings of $G$ that color $w$ with $x$ and $z$ with $y$.
\end{lem}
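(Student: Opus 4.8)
The idea is to fix the colors $x$ on $w$ and $y$ on $z$ and then greedily color the remaining $2k+1$ vertices of $G$ in a carefully chosen order. First note that since $x \in L(w) - L(z)$ and $y \in L(z)$, we have $x \neq y$, so coloring $w$ with $x$ and $z$ with $y$ gives a proper partial $L$-coloring. Suppose $v_1, \dots, v_{2k+1}$ is an enumeration of $V(G) - \{w,z\}$ such that, with the possible exception of at most two indices, every $v_j$ has at most one neighbor in $\{w, z, v_1, \dots, v_{j-1}\}$, and at each exceptional index $v_j$ has at most two such neighbors. Coloring $v_1, \dots, v_{2k+1}$ in this order, when we reach $v_j$ at least $m-1$ colors of $L(v_j)$ are still available (at least $m-2$ at an exceptional index), and since $m \geq 4$ all of these counts are positive; hence the number of proper $L$-colorings of $G$ that color $w$ with $x$ and $z$ with $y$ is at least $(m-1)^{2k-1}(m-2)^2$. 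So it suffices to produce such an enumeration, which I will do by a short case analysis on the edge $wz$.

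Recall that $u$ and $v$ have degree $3$ and are nonadjacent (all three paths have length at least $2$), while $x_1$, $y_1$, $z_1, \dots, z_{2k-1}$ have degree $2$. Using the automorphisms of $G$ (swapping $x_1 \leftrightarrow y_1$, and the reflection $u \leftrightarrow v$, $z_i \leftrightarrow z_{2k-i}$) it is enough to treat three cases for the edge $wz$: (i) $\{w,z\} = \{z_i, z_{i+1}\}$ for some $1 \leq i \leq 2k-2$; (ii) $\{w,z\} = \{u, z_1\}$; (iii) $\{w,z\} = \{u, x_1\}$. In case (i) take the enumeration $z_{i-1}, z_{i-2}, \dots, z_1, u, z_{i+2}, z_{i+3}, \dots, z_{2k-1}, v, x_1, y_1$ (omitting an initial and/or middle segment when $i = 1$ or $i = 2k-2$): running outward along $S_3$ from the colored edge in both directions, each $z_j$ acquires exactly one colored neighbor, then $u$ sees only $z_1$, then $v$ sees only $z_{2k-1}$, and only the final two vertices $x_1$, $y_1$ --- whose neighborhood is exactly $\{u,v\}$ --- see two colored neighbors. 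In case (ii) take $z_2, z_3, \dots, z_{2k-1}, v, x_1, y_1$; again only $x_1$ and $y_1$ see two colored neighbors. In case (iii) take $z_1, z_2, \dots, z_{2k-1}, v, y_1$; here the two vertices seeing two colored neighbors are $v$ (with neighbors $z_{2k-1}$, $x_1 \in \{w,z\}$, $y_1$) and $y_1$ (with neighbors $u \in \{w,z\}$, $v$). In each case a direct check confirms the enumeration has length $2k+1$ and exactly two exceptional vertices.

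\textbf{Main obstacle.} There is no deep difficulty here, but the step needing care is ensuring that the two exceptional vertices see exactly two (never three) already-colored neighbors; equivalently, the enumeration must reach each degree-$3$ vertex ($u$ and $v$) before at least one of its three neighbors. If instead both $u$ and $v$ were colored only after $x_1$ and $y_1$, then one of $x_1, y_1$ would see three colored neighbors and we would only obtain $(m-1)^{2k}(m-3)$, which is strictly less than $(m-1)^{2k-1}(m-2)^2$. Cases (ii) and (iii), where $wz$ is incident to $u$ or $v$ so that $u$ or $v$ is no longer free to be placed early, are precisely where this constraint forces the particular choices of enumeration made above.
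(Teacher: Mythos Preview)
Your proof is correct and follows the same greedy strategy as the paper. The paper avoids your three-case analysis by a single WLOG reduction: by the $x_1 \leftrightarrow y_1$ symmetry the edge $wz$ may be assumed to lie on the $(2k+2)$-cycle $W$ formed by $S_2 \cup S_3$, after which one simply walks around $W$ from $z$, then colors $x_1$ last. This gives the same count $(m-1)^{2k-1}(m-2)^2$ in one stroke, whereas your argument trades that observation for explicitness. One small slip in your ``Main obstacle'' paragraph: it is $u$ or $v$ (the degree-$3$ vertices), not $x_1$ or $y_1$, that could end up with three already-colored neighbors if the enumeration is chosen carelessly --- $x_1$ and $y_1$ have degree $2$, so they can never see three.
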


\begin{proof}
We may assume without loss of generality that $uv$ is in the copy of $C_{2k+2}$, $W$, contained in $G$ that consists of the paths $S_2$ and $S_3$.  We can greedily construct a proper $L$-coloring of $G$ with the desired property as follows. Begin by first coloring $w$ with $x$ and $z$ with $y$.  Then greedily color the remaining $2k$ vertices of $W$ (by proceeding around the cycle from $z$).  Finally, color $x_1$ with a color in $L(x_1)$ that is not in conflict with the color assigned to either of the neighbors of $x_1$.

Notice that in constructing this greedy coloring we have one choice in how we color our first two vertices (i.e., $w$ and $z$).  We have at least $(m-1)$ color choices for each of the next $(2k-1)$ vertices, and we have at least $(m-2)$ color choices for each of our last two vertices.  The result immediately follows.
\end{proof}

Before proving the next lemma, we note that by Theorem~6 in~\cite{BK23}, $P_{DP}(\Theta(2,2,2k),m) = ((m-1)^{2k+4}-(m-1)^{2k}-2(m-1)^2+2)/m$.

\begin{lem} \label{lem: allbut3}
Suppose $G = \Theta(2,2,2k)$ with $k \geq 2$.  Suppose that $L$ is an $m$-assignment for $G$ with $m \geq 4$ and $L(u) \neq L(v)$.  Then, $P(G,L) \geq P(G,m)$.
\end{lem}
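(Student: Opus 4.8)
The plan is to combine the DP-coloring lower bound from~\cite{BK23} with the greedy coloring count of \cref{lem: greed}, via the bridge provided by \cref{lem: DPconnection}. The proof splits into two cases depending on whether $L(u)\neq L(v)$ is ``witnessed'' by a single edge of $G$.

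First I would handle the case where there is an edge $wz\in E(G)$ and colors $x\in L(w)-L(z)$, $y\in L(z)-L(w)$. In that case \cref{lem: greed} tells us that every such ordered pair $(x,y)$ extends to at least $(m-1)^{2k-1}(m-2)^2$ proper $L$-colorings, so in particular the hypothesis of \cref{lem: DPconnection} holds with $C = (m-1)^{2k-1}(m-2)^2$ and $d = |L(w)-L(z)| \geq 1$. Applying \cref{lem: DPconnection} and the formula $P_{DP}(\Theta(2,2,2k),m) = ((m-1)^{2k+4}-(m-1)^{2k}-2(m-1)^2+2)/m$ quoted before \cref{lem: allbut3}, we get
\[P(G,L) \geq \frac{(m-1)^{2k+4}-(m-1)^{2k}-2(m-1)^2+2}{m} + (m-1)^{2k-1}(m-2)^2.\]
It then remains to check that this quantity is at least $P(G,m) = (m-2)^2((m-1)^{2k+1}-(m-1))+(m-1)^2((m-1)^{2k}+(m-1))$ for all $m\geq 4$ and $k\geq 2$. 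This is a routine but slightly delicate polynomial inequality; I would substitute $t=m-1\geq 3$, clear the denominator $m = t+1$, and verify the resulting polynomial inequality in $t$ holds for $t\geq 3$ (treating $k$ as a parameter and checking the dominant-term behaviour plus the base case $k=2$, $t=3$). One could also take a single $d=1$ to keep the algebra lighter, since even $d=1$ should suffice.

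Second, I would dispose of the remaining case: $L(u)\neq L(v)$ but for \emph{every} edge $wz$ of $G$ we have $L(w)-L(z)=\emptyset$ or $L(z)-L(w)=\emptyset$, i.e.\ on each edge one list contains the other. Since $L(u)\neq L(v)$ and both have size $m$, neither contains the other, so $u$ and $v$ are non-adjacent (which they are anyway in $\Theta(2,2,2k)$), and the list-containment must ``propagate'' along each of the three internal paths. Walking along path $S_1 = u, x_1, v$: either $L(u)\subseteq L(x_1)$ or $L(x_1)\subseteq L(u)$, and similarly between $x_1$ and $v$; since all lists have size $m$, comparable lists of equal size are equal, so $L(x_1)=L(u)$ or $L(x_1)=L(v)$, and chasing this forces $L(u)=L(v)$ unless some internal list along the long path $S_3$ breaks the chain --- but equal-size comparable lists are always equal, so in fact every list along $S_3$ equals $L(u)=L(v)$, a contradiction. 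Hence this case is vacuous, or reduces to $L(u)=L(v)$ which is already covered by \cref{lem: samelist}. Actually the cleanest phrasing: if no edge witnesses the strict difference, then along $S_1$ we get $L(u)=L(x_1)=L(v)$, contradicting $L(u)\neq L(v)$; so some edge does witness it and we are in the first case.

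I expect the main obstacle to be the final polynomial inequality in the first case: one must confirm that the DP-color-function lower bound, which is smaller than $P(G,m)$ on its own, is pushed above $P(G,m)$ by the additive term $(m-1)^{2k-1}(m-2)^2 d$ even when $d=1$ and $m=4$. The gap $P(G,m)-P_{DP}(G,m)$ grows like a constant times $(m-1)^{2k+2}$ (from the $(m-2)^2(m-1)^{2k+1}$ term in $P(G,m)$ versus $(m-1)^{2k+4}/m$ in $P_{DP}$), while the correction term grows like $(m-1)^{2k+1}$, so one needs the precise lower-order constants to line up; I would verify this by expanding both sides in powers of $t=m-1$ after clearing denominators and confirming term-by-term dominance for $t\geq 3$, handling $k=2$ separately if the general bound is tight there.
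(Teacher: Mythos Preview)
Your approach is essentially the paper's: use \cref{lem: DPconnection} with $C=(m-1)^{2k-1}(m-2)^2$ from \cref{lem: greed} and the DP color function formula, then verify the resulting inequality. Your ``Case~2'' argument is correct and is exactly the justification (via equal-size lists along a $u$--$v$ path) for the paper's one-line claim that some edge witnesses $L(u)\neq L(v)$.

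However, your anticipated obstacle rests on a miscalculation. The gap $P(G,m)-P_{DP}(G,m)$ is \emph{not} of order $(m-1)^{2k+2}$; if you actually expand both quantities (set $t=m-1$ and note $t^{2k+4}-t^{2k}-2t^2+2=(t+1)(t-1)(t^{2k+2}+t^{2k}-2)$), the leading terms cancel and you get exactly
\[
P(G,m)-P_{DP}(G,m) \;=\; (m-1)^{2k}+2(m-1)^2+(m-1)-2.
\]
This is of order $(m-1)^{2k}$, an entire factor of $(m-1)$ \emph{below} your correction term $(m-1)^{2k-1}(m-2)^2$, so the inequality is comfortable rather than delicate: for $m\ge 4$ one checks
\[
2(m-1)^2+(m-1)-2 \le 3(m-1)^2 \le (m-1)^3 \le (m-4)(m-1)^{2k}+(m-1)^{2k-1},
\]
and adding $(m-1)^{2k}$ to both sides gives $(m-1)^{2k}+2(m-1)^2+(m-1)-2 \le (m-1)^{2k-1}(m-2)^2$. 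No base-case checking or denominator-clearing in $t$ is needed.
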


\begin{proof}
Since $L(u) \neq L(v)$, there is a $wz \in E(G)$ such that there is an $x \in L(w)-L(z)$ and a $y \in L(z)-L(w)$.  So, by \cref{lem: DPconnection,lem: greed}
\[P(G,L) \geq P_{DP}(G,m)+ (m-1)^{2k-1}(m-2)^2.\]

It is easy to see that $P(G,m)-P_{DP}(G,m) = (m-1)^{2k} + 2(m-1)^2 + (m-1) - 2$.  Now, note that since $m \geq 4$ and $k \geq 2$,
\[2(m-1)^2 + (m-1) - 2 \leq 3(m-1)^2 \leq (m-1)^3 \leq (m-4)(m-1)^{2k} + (m-1)^{2k-1}.\]
This means $(m-1)^{2k} + 2(m-1)^2 + (m-1) - 2 \leq (m-1)^{2k-1}(m-2)^2$.  So, $P(G,L) \geq P(G,m)$.
\end{proof}

\thmthreeandabove*

\begin{proof}
Suppose $m \geq 3$ is fixed and that $L$ is an $m$-assignment for $G$ such that $P(G,L) = P_{\ell}(G,m)$.  Clearly, $P(G,m) \geq P(G,L)$.  If $L(u)=L(v)$, then \cref{lem: samelist} implies $P(G,L) \geq P(G,m)$ and the result follows.  In the case that $m=3$ and $L(u) \neq L(v)$, \cref{lem: AMGM3} implies $P(G,L) \geq P(G,m)$ and the result follows.  In the case that $m > 3$ and $L(u) \neq L(v)$, \cref{lem: allbut3} implies $P(G,L) \geq P(G,m)$ and the result follows.    
\end{proof}

\section{Support for \cref{conj: join}} \label{prob}

In this section, in support of \cref{conj: join}, we prove $K_1 \vee \Theta(2,2,2k)$ is enumeratively chromatic-choosable (\cref{thm: thetagraph}).  We also give an example of a graph with chromatic number $3$ that is chromatic-choosable but is not weakly enumeratively chromatic-choosable which answers \cref{ques: notecc} in the affirmative for $k=3$.

\begin{lem} \label{lem: prob}
Suppose $G$ is a graph with $V(G) = \{v_1, \ldots, v_n \}$.  Suppose that $m \in \N$, $d_1, \ldots, d_n$ are nonnegative integers, and $L$ is a list assignment for $G$ such that $|L(v_i)|=m+d_i$.  Then,
\[P(G,L) \geq \left \lceil \frac{P_{\ell}(G,m)\prod_{i=1}^n (m+d_i)}{m^n} \right \rceil\]
\end{lem}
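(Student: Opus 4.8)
The plan is to reduce to the uniform case by a random-restriction (color-sampling) argument. Fix an $m$-assignment $L'$ for $G$ with $P(G,L') = P_{\ell}(G,m)$; our goal is really to lower-bound $P(G,L)$ for the given $L$ with $|L(v_i)| = m + d_i$, but since $L$ is arbitrary it suffices to show the stated inequality for every such $L$. First I would choose, independently for each $i \in [n]$, a uniformly random $m$-element subset $L_{\mathrm{rand}}(v_i) \subseteq L(v_i)$. This $L_{\mathrm{rand}}$ is an $m$-assignment for $G$, so $P(G, L_{\mathrm{rand}}) \geq P_{\ell}(G,m)$ always. I would then compute $\mathbb{E}[P(G, L_{\mathrm{rand}})]$ in two ways.

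The key computation is linearity of expectation over proper colorings. Writing $P(G, L_{\mathrm{rand}}) = \sum_{f} \mathbf{1}[f \text{ is a proper } L_{\mathrm{rand}}\text{-coloring}]$ where the sum ranges over all proper $L$-colorings $f$ of $G$ (a proper $L_{\mathrm{rand}}$-coloring is automatically a proper $L$-coloring), we get
\[
\mathbb{E}[P(G, L_{\mathrm{rand}})] = \sum_{f \text{ proper } L\text{-coloring}} \Pr[f(v_i) \in L_{\mathrm{rand}}(v_i) \text{ for all } i].
\]
For a fixed $f$ and fixed $i$, the probability that the particular color $f(v_i)$ lies in the random $m$-subset $L_{\mathrm{rand}}(v_i)$ of the $(m+d_i)$-set $L(v_i)$ is $\binom{m+d_i-1}{m-1}/\binom{m+d_i}{m} = m/(m+d_i)$; by independence across $i$ the joint probability is $\prod_{i=1}^n m/(m+d_i)$. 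Hence $\mathbb{E}[P(G, L_{\mathrm{rand}})] = P(G,L)\prod_{i=1}^n \frac{m}{m+d_i}$. Combining with the bound $P(G, L_{\mathrm{rand}}) \geq P_{\ell}(G,m)$ (hence its expectation is $\geq P_{\ell}(G,m)$) gives
\[
P(G,L)\prod_{i=1}^n \frac{m}{m+d_i} \;\geq\; P_{\ell}(G,m),
\]
which rearranges to $P(G,L) \geq P_{\ell}(G,m)\prod_{i=1}^n (m+d_i)/m^n$. Since $P(G,L)$ is an integer, we may take the ceiling, yielding the claimed bound.

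I do not expect a serious obstacle here — the main point to get right is the elementary probability identity $\Pr[f(v_i) \in L_{\mathrm{rand}}(v_i)] = m/(m+d_i)$ and the observation that any proper $L_{\mathrm{rand}}$-coloring is a proper $L$-coloring (so the indicator decomposition is legitimate and the events are over the same sample space with independent coordinates). One should also note the degenerate possibility $P_{\ell}(G,m) = 0$, in which case the inequality is trivial. An essentially equivalent, expectation-free phrasing would be to average $P(G, L_{\mathrm{rand}})$ over all $\prod_i \binom{m+d_i}{m}$ choices of subsets directly and count, for each proper $L$-coloring $f$, how many choices of $(L_{\mathrm{rand}}(v_i))_i$ make $f$ proper for $L_{\mathrm{rand}}$; this is a clean double-counting argument with no probability invoked, and I would likely present it that way if the paper prefers to avoid probabilistic language, though the probabilistic phrasing is shorter.
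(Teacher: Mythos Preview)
Your proposal is correct and is essentially the same probabilistic argument the paper gives: randomly restrict each list $L(v_i)$ to an $m$-subset, use linearity of expectation over proper $L$-colorings to compute $\mathbb{E}[P(G,L_{\mathrm{rand}})] = P(G,L)\prod_i m/(m+d_i)$, and compare with the pointwise bound $P(G,L_{\mathrm{rand}}) \geq P_\ell(G,m)$. The only cosmetic differences are that the paper phrases the random restriction as ``delete $d_i$ colors'' and finishes via an existence-of-a-realization-below-the-mean step rather than taking expectations on both sides; your opening sentence fixing a minimizing $L'$ is unused and can be dropped.
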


\begin{proof}
Let $\mathcal{F}$ be the set of all proper $L$-colorings of $G$.  For each $i \in [n]$ uniformly at random delete $d_i$ colors from $L(v_i)$ and call the resulting $m$-assignment for $G$, $L'$.  Clearly, $P_{\ell}(G,m) \leq P(G, L')$.

Now, for each $f \in  \mathcal{F}$, let $X_f$ be the random variable that is equal to 1 if $f$ is a proper $L'$-coloring of $G$ and is equal to 0 otherwise.  Since every proper $L'$-coloring of $G$ is also a proper $L$-coloring of $G$, we have that $P(G,L') = \sum_{f \in \mathcal{F}} X_f$.  Notice that for each $f \in \mathcal{F}$ the probability that $f(v_i) \in L'(v_i)$ is \[1 - \frac{d_i}{m+d_i} = \frac{m}{m+d_i}\] for each $i \in [n]$.  
Consequently, the probability that $X_f = 1$ is $\prod_{i=1}^n m/(m+d_i)$.  This means that
\[\E \left( \sum_{f \in \mathcal{F}} X_f \right) = |\mathcal{F}| \prod_{i=1}^n \frac{m}{m+d_i} = \frac{m^nP(G,L)}{\prod_{i=1}^n (m+d_i)}.  \]
So, there is an $m$-assignment, $L''$ of $G$ formed by deleting $d_i$ colors from $L(v_i)$ for each $i \in [n]$ such that
\[P_{\ell}(G,m) \leq P(G,L'') \leq \frac{m^nP(G,L)}{\prod_{i=1}^n (m+d_i)}.\]
The fact that $P(G,L)$ is an integer then implies the desired result.
\end{proof}

Before proving \cref{thm: thetagraph}, we need an observation and proposition.

\begin{obs} \label{obs: star}
Suppose $G = K_{1,n}$ where $n \in \N$, and $L$ is a 2-assignment for $G$.  If $L$ does not assign the same list of colors to each $v \in V(G)$, $P(G,L) \geq 3$.    
\end{obs}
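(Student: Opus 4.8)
The plan is to compute $P(G,L)$ exactly by a center-first counting argument. Write the center of the star as $c$ and the leaves as $u_1,\dots,u_n$, and set $L(c)=\{a,b\}$ with $a\neq b$. Every proper $L$-coloring arises by first choosing a color $\alpha\in L(c)$ for $c$ and then, independently for each leaf $u_i$, a color in $L(u_i)\setminus\{\alpha\}$; conversely, every such choice yields a proper $L$-coloring. Hence
\[
P(G,L)=\prod_{i=1}^n \bigl|L(u_i)\setminus\{a\}\bigr|+\prod_{i=1}^n \bigl|L(u_i)\setminus\{b\}\bigr|.
\]

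Next I would note that, since $|L(u_i)|=2$, each factor $\bigl|L(u_i)\setminus\{\alpha\}\bigr|$ lies in $\{1,2\}$, and equals $1$ exactly when $\alpha\in L(u_i)$. In particular each of the two products above is at least $1$, so $P(G,L)\ge 2$ unconditionally. To upgrade this to $P(G,L)\ge 3$ it suffices to show that at least one of the two products is at least $2$, which happens precisely when there is an index $i$ with $a\notin L(u_i)$, or an index $i$ with $b\notin L(u_i)$.

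Finally I would invoke the hypothesis that $L$ is not constant on $V(G)$. If neither of the above indices existed, then $a\in L(u_i)$ and $b\in L(u_i)$ for every $i$; since $|L(u_i)|=2$ this forces $L(u_i)=\{a,b\}=L(c)$ for all $i$, so $L$ would assign the same list to every vertex of $G$, a contradiction. Therefore one of the two products is at least $2$ and the other at least $1$, giving $P(G,L)\ge 3$.

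There is no real obstacle in this argument; the only points requiring a little care are the bookkeeping between the center list and the leaf lists in the final contradiction (checking that "all leaf lists equal $L(c)$" genuinely exhausts $V(G)$, since $V(G)=\{c,u_1,\dots,u_n\}$), and observing that the computation and the conclusion are valid for every $n\in\N$, including the degenerate case $n=1$.
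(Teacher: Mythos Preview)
Your argument is correct: the center-first count gives the exact formula, each product is at least $1$, and the non-constant hypothesis forces at least one factor in one of the two products to equal $2$. The paper states this as an observation without proof, so there is no alternative argument to compare against; your write-up is precisely the kind of direct verification the authors are implicitly leaving to the reader.
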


\begin{pro} \label{pro: joinbipartite}
Suppose $G$ is a connected graph with $\chi(G)=2$ and bipartition $X$, $Y$.  If $P_{\ell}(G,2) \geq 1$, then $K_1 \vee G$ is weakly enumeratively chromatic-choosable.    
\end{pro}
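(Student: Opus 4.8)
The plan is to show that $K_1\vee G$ satisfies $P_\ell(K_1\vee G,3)=P(K_1\vee G,3)$. Since $\chi(K_1\vee G)=3$ and $P(K_1\vee G,3)=3\,P(G,2)=6$ (using $P(K_1\vee H,m)=mP(H,m-1)$ and the fact that a connected graph of chromatic number $2$ has exactly two proper $2$-colorings), and since $P_\ell\le P$ in general, it suffices to prove $P(K_1\vee G,L)\ge 6$ for every $3$-assignment $L$ of $K_1\vee G$. Fix such an $L$, let $u$ be the vertex of the $K_1$, and write $L(u)=\{c_1,c_2,c_3\}$. Conditioning on the color of $u$ gives the identity
\[ P(K_1\vee G,L)=\sum_{i=1}^{3}P(G,L_i),\qquad L_i(v):=L(v)\setminus\{c_i\}\ \ (v\in V(G)), \]
so $P(G,L_i)$ counts the proper $L$-colorings of $G$ that avoid $c_i$, and each $L_i$ has all lists of size $2$ or $3$. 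Everything after this identity is bookkeeping plus two ingredients.

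The first ingredient is a ``big list'' bound: if $M$ is a list assignment for the $2$-choosable graph $G$ with $|M(v)|\ge 2$ for all $v$ and $|M(v_0)|\ge 3$ for some $v_0$, then $P(G,M)\ge 2$. One proves this by choosing distinct $\alpha,\beta,\gamma\in M(v_0)$: applying $2$-choosability to a $2$-sublist of $M$ restricting $v_0$ to $\{\alpha,\beta\}$ produces a proper coloring $f$, and re-running with the same sublists elsewhere but $v_0$ restricted to the two-element subset of $\{\alpha,\beta,\gamma\}$ omitting $f(v_0)$ produces a second proper coloring that disagrees with $f$ at $v_0$. (This is where the hypothesis $P_\ell(G,2)\ge 1$, i.e.\ that $G$ is $2$-choosable, is used.) The second ingredient is a counting move exploiting the bipartition $X,Y$: if a color $c$ lies in $L(v)$ for every $v\in V(G)$, write $L(v)=\{c\}\cup S_v$ with $|S_v|=2$; then for any color $c'\ne c$, coloring all of $X$ with $c$ forces each $y\in Y$ to be colored from $S_y\setminus\{c'\}$, and every such choice is a proper $L$-coloring of $G$ avoiding $c'$, giving $2^{n_Y}$ of them with $n_Y=|\{y\in Y:c'\notin S_y\}|$; symmetrically, coloring all of $Y$ with $c$ yields $2^{n_X}$ more, and since $G$ has an edge these two families are disjoint. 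Hence the number of proper $L$-colorings of $G$ avoiding $c'$ is at least $2^{n_X}+2^{n_Y}$, where $n_X+n_Y=|\{v\in V(G):c'\notin L(v)\}|$; this is $\ge 2$ always, and $\ge 3$ as soon as $c'$ is missing from at least one list.

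Now assemble. If none of $c_1,c_2,c_3$ lies in every $L(v)$, then each $L_i$ has a size-$3$ list, so $P(G,L_i)\ge 2$ by the big-list bound and $P(K_1\vee G,L)\ge 6$. Otherwise relabel so $c_1\in L(v)$ for all $v$, and apply the counting move with $c=c_1$ to $c'=c_2$ and $c'=c_3$, so $P(G,L_2),P(G,L_3)\ge 2$. If exactly one $c_i$ (namely $c_1$) lies in all lists, then $c_2,c_3$ are each missing from some list, hence $P(G,L_2),P(G,L_3)\ge 3$, while $P(G,L_1)\ge P_\ell(G,2)\ge 1$, so the sum is $\ge 7$. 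If exactly two lie in all lists, say $c_1,c_2$, then $c_3$ is missing from some list so $P(G,L_3)\ge 3$, and applying the same counting idea inside the $2$-assignment $L_1$ (in which $c_2$ now lies in every list) gives $P(G,L_1)\ge 2$; with $P(G,L_2)\ge 2$ the sum is $\ge 7$. If all three lie in all lists, then $L(v)=\{c_1,c_2,c_3\}$ for every $v$, each $L_i$ is a constant $2$-assignment, $P(G,L_i)=P(G,2)=2$, and the sum equals $6$. In every case $P(K_1\vee G,L)\ge 6$.

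The step I expect to carry the real weight is the bipartite counting move: one must verify carefully that the ``$X$-monochromatic'' and ``$Y$-monochromatic'' families of colorings are genuinely proper and genuinely disjoint — this is precisely where connectedness enters, ensuring $G$ has an edge and both parts are nonempty, so that no all-$c$ coloring is proper — and that $2^{n_X}+2^{n_Y}\ge 3$ whenever $n_X+n_Y\ge 1$. The big-list bound and the final case split are then routine.
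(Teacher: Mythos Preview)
Your proof is correct. Both you and the paper begin with the identity $P(K_1\vee G,L)=\sum_{i=1}^3 P(G,L_i)$ and split according to how many of the $c_i$ lie in every list $L(v)$, but the toolboxes differ. In the case where no $c_i$ is ubiquitous, the paper invokes its probabilistic \cref{lem: prob} to get each $P(G,L_i)\ge\lceil 3/2\rceil=2$, whereas you obtain the same bound by the elementary two-step $2$-choosability trick (your ``big list'' bound). In the case where some $c_i$ is ubiquitous, the paper partitions colorings by where that color $c_1$ lands (on $u$, on all of $X$, or on all of $Y$) and bounds the resulting pieces via the star observation $P_\ell(K_{1,n},2)\ge 2$ and \cref{obs: star}; you instead stay with the decomposition by the color of $u$ and bound $P(G,L_j)$ directly via the $X$-monochromatic/$Y$-monochromatic count $2^{n_X}+2^{n_Y}$. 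Your route is a bit more self-contained---it avoids the separate reduction to $P_\ell(G,2)\ge 2$ via \cref{thm: joinK1}, the probabilistic lemma, and the star observation---at the cost of a slightly finer four-way case split. The paper's route, on the other hand, packages its lemmas for reuse elsewhere.
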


\begin{proof}
In the case $P_{\ell}(G,2) \geq 2$, \cref{thm: joinK1} implies the desired result.  So, we suppose that $P_{\ell}(G,2) = 1$.

Let $H = K_1 \vee G$, and suppose that $u$ is the vertex from the copy of $K_1$ used to form $H$.  Clearly, $P(H,3)=6$.  Suppose that $L$ is a 3-assignment for $H$ such that $P_{\ell}(H,3) = P(H,L)$.  We will show that $P(H,L) \geq 6$.

Suppose that $L(u) = \{c_1, c_2, c_3 \}$.  For each $i \in [3]$, let $o_i = |\{v \in V(G) : c_i \notin L(v) \}|$.  We will show $P(H,L) \geq 6$ when: (1) $o_i=0$ for some $i \in [3]$ and (2) $o_i \geq 1$ for each $i \in [3]$.

First, suppose without loss of generality that $o_1 = 0$.  Let $L_1$ be the $2$-assignment for $H[X \cup Y]$ given by $L_1(v) = L(v) - \{c_1\}$ for each $v \in X \cup Y$.  Let $L_2$ be the $2$-assignment for $H[Y \cup \{u\}]$ given by $L_2(v) = L(v) - \{c_1\}$ for each $v \in Y \cup \{u\}$.  Let $L_3$ be the $2$-assignment for $H[X \cup \{u\}]$ given by $L_3(v) = L(v) - \{c_1\}$ for each $v \in X \cup \{u\}$.

Notice that if $L_2$ and $L_3$ assign the same list to every vertex in $H[Y \cup \{u\}]$ and $H[X \cup \{u\}]$ respectively, then $L(v) = \{c_1,c_2,c_3\}$ for each $v \in V(G)$ which immediately implies $P(H,L) = P(H,3)=6$.  So, we may assume without loss of generality that $L_2$ doesn't assign the same list to every vertex in $H[Y \cup \{u\}]$.  Clearly, a proper $L_1$-coloring, a proper $L_2$-coloring, and a proper $L_3$-coloring can be extended to a proper $L$-coloring of $H$ by coloring $u$ with $c_1$, coloring each vertex in $X$ with $c_1$, and coloring each vertex in $Y$ with $c_1$ respectively.  Then, \cref{thm: examples} and \cref{obs: star} imply
\begin{align*}
P(H,L) &\geq P(H[X \cup Y],L_1) + P(H[Y \cup \{u\}],L_2) + P(H[X \cup \{u\}],L_3) \\ 
&\geq P_{\ell}(G,2) + 3 + P_{\ell}(K_{1,|X|},2) \\
&\geq 1+3+2=6.    
\end{align*}
Now, we may suppose $o_i \geq 1$ for each $i \in [3]$.  For each $i \in [3]$, let $L^{(i)}$ be the list assignment for $G$ given by $L^{(i)}(v) = L(v) - \{c_i\}$ for each $v \in V(G)$.  Since $o_i \geq 1$ for each $i \in [3]$, \cref{lem: prob} implies $P(G,L^{(i)}) \geq \lceil 1(3)/2 \rceil = 2$.  Thus,
$$P(H,L) \geq \sum_{i=1}^3 P(G,L^{(i)}) \geq 6$$
as desired.
\end{proof}

\begin{thm} \label{thm: thetagraph}
If $G = K_1 \vee \Theta(2,2,2k)$ where $k \in \N$, then $G$ enumeratively chromatic-choosable.
\end{thm}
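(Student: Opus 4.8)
The plan is to assemble this from results already in the paper, splitting on whether $k=1$ or $k\ge 2$. Write $H=\Theta(2,2,2k)$ and $G=K_1\vee H$. Since $H$ is bipartite with at least one edge, $\chi(H)=2$ and hence $\chi(G)=3$, so ``$G$ is enumeratively chromatic-choosable'' means exactly $P_\ell(G,m)=P(G,m)$ for every $m\ge 3$. When $k=1$ we have $H=\Theta(2,2,2)=K_{2,3}$, which is enumeratively chromatic-choosable by \cref{thm: examples}(iii), and then \cref{thm: examples}(v) (equivalently, a single application of \cref{thm: joinK1} to each $m\ge 2$) shows $G=K_{2,3}\vee K_1$ is enumeratively chromatic-choosable. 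So we may assume $k\ge 2$ from now on, and it remains to verify $P_\ell(G,m)=P(G,m)$ for all $m\ge 3$.

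For $m\ge 4$ the argument is short: since $m-1\ge 3$, \cref{thm: 3andabove} gives $P_\ell(H,m-1)=P(H,m-1)$, and then \cref{thm: joinK1} (with the roles $G\leftrightarrow H$ and $m\leftrightarrow m-1$) yields $P_\ell(H\vee K_1,m)=P(H\vee K_1,m)$, i.e. $P_\ell(G,m)=P(G,m)$.

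The case $m=3$ is the only genuinely delicate one — precisely because \cref{cor: Theta222tcc} says that for $k\ge 2$ the graph $H$ is \emph{not} weakly enumeratively chromatic-choosable, so we cannot pass from $m=2$ to $m=3$ through \cref{thm: joinK1}. Instead I would invoke the sharper \cref{pro: joinbipartite}: $H$ is connected with $\chi(H)=2$ and carries a (unique) bipartition, and $H$ is $2$-choosable by \cref{thm: 2choosable} (its core is $\Theta(2,2,2k)$ itself), so $P_\ell(H,2)\ge 1$. Hence \cref{pro: joinbipartite} applies and gives that $K_1\vee H=G$ is weakly enumeratively chromatic-choosable, i.e. $P_\ell(G,3)=P(G,3)$. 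Combining the two ranges, $P_\ell(G,m)=P(G,m)$ for every $m\ge 3=\chi(G)$, which is the claim.

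In effect there is no new obstacle: the real work sits inside \cref{thm: 3andabove} and \cref{pro: joinbipartite}, and all that remains is the bookkeeping at the two boundary cases — making sure $k=1$ is routed through \cref{thm: examples} and that $m=3$ is handled by \cref{pro: joinbipartite} rather than by the generic join lemma, which fails there.
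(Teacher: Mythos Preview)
Your proof is correct and follows essentially the same route as the paper: the case $k=1$ is dispatched via \cref{thm: examples}, and for $k\ge 2$ you handle $m=3$ through \cref{pro: joinbipartite} (using $2$-choosability of $\Theta(2,2,2k)$ to get $P_\ell(H,2)\ge 1$) and $m\ge 4$ by combining \cref{thm: 3andabove} with \cref{thm: joinK1}. The only cosmetic difference is that the paper cites \cref{cor: Theta222tcc} rather than \cref{thm: 2choosable} for the $2$-choosability input, which amounts to the same thing.
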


\begin{proof}
When $k=1$, the result immediately follows from \cref{thm: examples}.

Now, suppose that $k \geq 2$.  By~\cref{cor: Theta222tcc} and \cref{pro: joinbipartite}, we have that $P_{\ell}(G,3) = P(G,3)$.  Then, by \cref{thm: joinK1,thm: 3andabove}, we have that $P_{\ell}(G,m) = P(G,m)$ whenever $m \geq 4$.  Thus, $G$ is enumeratively chromatic-choosable.
\end{proof}

We end this section by presenting an example of a graph with chromatic number $3$ that is chromatic-choosable but is not weakly enumeratively chromatic-choosable which answers \cref{ques: notecc} in the affirmative for $k=3$.

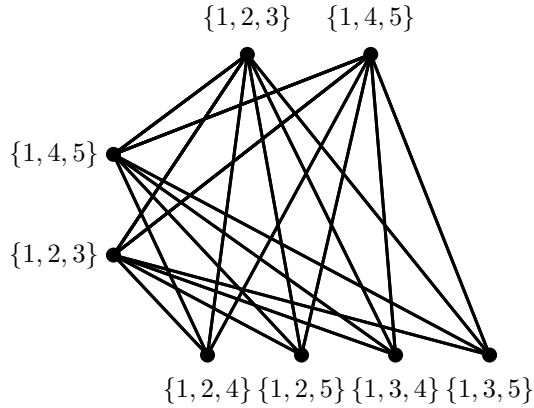
\begin{figure}[htb] 
 \begin{center}
     \begin{tikzpicture}
        [scale=1,auto=left,every node/.style={circle, fill, inner sep=0 pt, minimum size=2mm, outer sep=0pt},line width=.4mm]
        \node(x1) at (0,1.33)[label={[label distance=2pt]left:{\small$\{1,2,3\}$}}] {};
        \node (x2) at (0,2.67)[label={[label distance=2pt]left:{\small$\{1,4,5\}$}}] {};
        \node (z1) at (1.25,0)[label={[label distance=-7pt]below:{\small$\{1,2,4\}$}}]{};
        \node (z2) at (2.5,0)[label={[label distance=-7pt]below:{\small$\{1,2,5\}$}}]{};
        \node (z3) at (3.75,0)[label={[label distance=-7pt]below:{\small$\{1,3,4\}$}}]{};
        \node (z4) at (5,0)[label={[label distance=-7pt]below:{\small$\{1,3,5\}$}}]{};
        \node (y1) at (1.78,4)[label={[label distance=-7pt]above:{\small$\{1,2,3\}$}}] {};
        \node (y2) at (3.42,4)[label={[label distance=-7pt]above:{\small$\{1,4,5\}$}}]{};
        \begin{scope}[on background layer]
        \foreach \i in {1,2}{
        \foreach \k in {1,2}{
        \foreach \j in {1,2,3,4}{
        \draw[line width=.4mm] (x\i.center) to (y\k.center);
        \draw[line width=.4mm] (x\i.center) to (z\j.center);
        \draw[line width=.4mm] (y\k.center) to (z\j.center); }}}
        \end{scope}
        \end{tikzpicture}
        \end{center}
        \caption{The graph $G=K_{2,2,4}$ with its list assignment $L$.}
        \label{fig:k224}
\end{figure}

\begin{pro} \label{pro: counterexample}
Suppose $G = K_{2,2,4}$.  Then, $G$ is chromatic-choosable, but $G$ is not weakly enumeratively chromatic-choosable.
\end{pro}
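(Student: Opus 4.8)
The plan is to prove the two assertions separately; throughout I use the elementary fact that in any proper coloring of a complete multipartite graph the color classes of distinct parts are pairwise disjoint, since each vertex of one part is adjacent to every vertex of the others. Write the parts of $G=K_{2,2,4}$ as $X=\{x_1,x_2\}$, $Y=\{y_1,y_2\}$, $Z=\{z_1,z_2,z_3,z_4\}$. Note that $\chi(G)=3$ and that, with only three colors available and the three parts forced to receive disjoint nonempty color sets, a proper $3$-coloring must color each part monochromatically with the three distinct colors, so $P(G,3)=3!=6$. Thus ``not weakly enumeratively chromatic-choosable'' amounts to exhibiting one $3$-assignment with fewer than $6$ proper colorings, and ``chromatic-choosable'' amounts to $\chi_\ell(G)\le 3$.

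\emph{Not weakly enumeratively chromatic-choosable.} I would use the $3$-assignment $L$ of \cref{fig:k224}: $L(x_1)=L(y_1)=\{1,2,3\}$, $L(x_2)=L(y_2)=\{1,4,5\}$, and $L(z_1),\dots,L(z_4)$ equal to $\{1,2,4\},\{1,2,5\},\{1,3,4\},\{1,3,5\}$, and count $P(G,L)$ by splitting on whether color $1$ appears on $Z$. If $1$ is used on $Z$, then $X$ and $Y$ avoid $1$, so the colors on $X$ (and likewise $Y$) consist of one element of $\{2,3\}$ and one of $\{4,5\}$; disjointness of the $X$- and $Y$-classes then forces the colors on $X\cup Y$ to be exactly $\{2,3,4,5\}$, hence every vertex of $Z$ is colored $1$, and the reduced $2$-assignment on $G[X\cup Y]\cong C_4$ (lists $\{2,3\},\{4,5\},\{2,3\},\{4,5\}$) is easily seen to have exactly four proper colorings. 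If $1$ is \emph{not} used on $Z$, then the reduced $Z$-lists $\{2,4\},\{2,5\},\{3,4\},\{3,5\}$ force the set of colors on $Z$ to contain $\{2,3\}$ or $\{4,5\}$; in the first case the two vertices with list $\{1,2,3\}$ are both forced to color $1$, and in the second the two vertices with list $\{1,4,5\}$ are both forced to color $1$ — but in each case these two vertices lie in different parts and are adjacent, a contradiction. Hence $P(G,L)=4<6=P(G,3)$, so $P_\ell(G,3)\le 4<6$ and $G$ is not weakly enumeratively chromatic-choosable.

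\emph{Chromatic-choosable.} Let $L$ be a $3$-assignment. If $\bigcap_{i=1}^4 L(z_i)\ne\emptyset$, color all of $Z$ with a common color, delete it from the lists of $X\cup Y$, and finish using $2$-choosability of $G[X\cup Y]\cong C_4$ (\cref{thm: 2choosable}); so assume $\bigcap_i L(z_i)=\emptyset$. I would split on whether $L(x_1)\cap L(x_2)$ and $L(y_1)\cap L(y_2)$ are empty. If both are nonempty and contain distinct colors $\beta\in L(x_1)\cap L(x_2)$, $\gamma\in L(y_1)\cap L(y_2)$, color $X$ with $\beta$ and $Y$ with $\gamma$; each $z_i$ then has a free color in $L(z_i)\setminus\{\beta,\gamma\}$ and $Z$ is independent. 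If instead $L(x_1)\cap L(x_2)=L(y_1)\cap L(y_2)=\{x\}$, color $X$ with $x$ and color $y_j$ from the $2$-set $L(y_j)\setminus\{x\}$ (the two such $2$-sets are disjoint); if every one of the four choices of $(f(y_1),f(y_2))$ were blocked by some $L(z_i)=\{x,f(y_1),f(y_2)\}$, then all four $L(z_i)$ would contain $x$, contradicting $\bigcap_i L(z_i)=\emptyset$, so an unblocked choice exists. The case where exactly one of $L(x_1)\cap L(x_2)$, $L(y_1)\cap L(y_2)$ is empty is handled the same way after swapping the roles of $X$ and $Y$. This leaves the case $L(x_1)\cap L(x_2)=L(y_1)\cap L(y_2)=\emptyset$, where both $X$ and $Y$ must use two colors; here I would pick distinct colors $p\in L(x_1)$, $q\in L(x_2)$, $r\in L(y_1)$, $s\in L(y_2)$ (which exist by Hall's theorem applied to these four $3$-element lists, or by a short direct argument) and show by counting that, since each of the at most four triples $L(z_1),\dots,L(z_4)$ can lie inside only boundedly many valid $4$-sets $\{p,q,r,s\}$ while the number of valid such $4$-sets is larger, some valid choice avoids all of them, after which $Z$ is colored greedily.

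The main obstacle is this last case: with no part monochromatizable, the four colors on $X\cup Y$ are genuinely forced to be distinct, and one must simultaneously avoid all four potential obstructions $L(z_i)\subseteq\{p,q,r,s\}$; I expect this to require a careful finite count of the transversals of $(L(x_1),L(x_2),L(y_1),L(y_2))$ against these obstructions (alternatively, chromatic-choosability of $K_{2,2,4}$ may be quotable from the literature on list coloring of complete multipartite graphs). Everything else reduces to the $2$-choosability of $C_4$ and the purely finite verification for the list assignment of \cref{fig:k224}.
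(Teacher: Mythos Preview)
Your argument for ``not weakly enumeratively chromatic-choosable'' is correct and uses exactly the paper's $3$-assignment; the paper organizes the case analysis slightly differently (it first argues that no vertex of $X\cup Y$ can receive color~$1$ and then deduces $f(Z)=\{1\}$), but the content and the final count $P(G,L)=4$ are the same.

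For chromatic-choosability the paper does not argue directly at all: it simply quotes $\chi_\ell(K_{2,2,4})=3$ from the literature (Enomoto--Ohba). Your attempted direct proof is correct in the cases you actually complete (a common color on $Z$; both $L(x_1)\cap L(x_2)$ and $L(y_1)\cap L(y_2)$ nonempty; exactly one of them empty), but the final case $L(x_1)\cap L(x_2)=L(y_1)\cap L(y_2)=\emptyset$ is a real gap. The counting heuristic you sketch (``each $L(z_i)$ lies in boundedly many valid $4$-sets while there are more valid $4$-sets'') does not go through as stated: a single $3$-element list $L(z_i)$ can be contained in many transversals $\{p,q,r,s\}$, not just one, so ``four obstructions versus more than four transversals'' is not enough without additional structure on how the $L(z_i)$ intersect the $X$- and $Y$-lists. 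Since you yourself flag this as the main obstacle and propose quoting the literature as a fallback, that fallback is exactly what the paper does; just cite the known result for this half of the proposition rather than leave a case open.
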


\begin{proof}
Suppose $G=K_{2,2,4}$ with partite sets $X = \{x_1, x_2\}$, $Y= \{y_1, y_2\}$, and \\ $Z = \{z_1,z_2,z_3,z_4\}$. In~\cite{EO02} it is shown that $\chi_{\ell}(G)=3$ which implies that $G$ is chromatic-choosable.

To show that $G$ is not enumeratively chromatic-choosable, we must show $P_{\ell}(G,3) < 6 = P(G,3)$.  Suppose $L$ is the 3-assignment for $G$ given by: $L(x_1)=L(y_1) = [3]$, $L(x_2)=L(y_2) = \{1,4,5\}$, $L(z_1) = \{1,2,4\}$, $L(z_2) = \{1,2,5\}$, $L(z_3) = \{1,3,4\}$, and $L(z_4) = \{1,3,5\}$ (see \cref{fig:k224}).

Now, suppose that $f$ is a proper $L$-coloring of $G$.  We first observe that $f(x) \neq 1$ for each $x \in X \cup Y$.  Consequently, we know that $\{2,3\} \not \subseteq f(Z)$ and $\{4,5\} \not \subseteq f(Z)$.  Since $x_1y_1, x_2y_2 \in E(G)$, we have that $\{1,z\} \not \subseteq f(Z)$ whenever $z \in \{2,3,4,5\}$.  This means $|f(Z)| = 1$ which immediately implies that $f(Z) = \{1\}$.  Then, we have that $f(\{x_1,y_1\}) = \{2,3\}$ and $f(\{x_2,y_2\}) = \{4,5\}$.  It follows that there are only four possible rules for $f$.

This means $4 = P(G,L) \geq P_{\ell}(G,3)$ as desired.
\end{proof}

{\bf Acknowledgment.}  The authors would like to thank Doug West for suggesting the study of enumeratively chromatic-choosable graphs. 

	\bibliographystyle{hplain}
\bibliography{bibliography}{}
\vspace{-5mm}

\end{document}